\documentclass[11pt]{article}
\usepackage{amsmath,amssymb,textcomp,stmaryrd,xifthen,psfrag,graphicx,color}
\usepackage{amsfonts,amsthm}
\usepackage{color}

\SetSymbolFont{stmry}{bold}{U}{stmry}{m}{n} 
\usepackage[T1]{fontenc}
\date{}

\oddsidemargin 0.5cm
\textwidth     16cm
\textheight    20cm
\setlength\parindent{0pt}

\newtheorem{theorem}{Theorem}
\newtheorem{lemma}[theorem]{Lemma}
\newtheorem{cor}[theorem]{Corollary}

\theoremstyle{definition} 
\newtheorem{remark}[theorem]{Remark}

\newcommand{\dual}[2]{\langle#1\hspace*{.5mm},#2\rangle}
\newcommand{\vdual}[2]{(#1\hspace*{.5mm},#2)}
\newcommand{\abs}[1]{\vert #1 \vert}
\newcommand{\norm}[3][]{#1\|#2#1\|_{#3}}
\newcommand{\snorm}[2]{|#1|_{#2}}

\newcommand{\diam}{\mathrm{diam}}
\newcommand{\wilde}{\widetilde}

\newcommand{\trace}{\gamma}

\def\eps{\varepsilon}

\newcommand{\R}{\ensuremath{\mathbb{R}}}
\newcommand{\N}{\ensuremath{\mathbb{N}}}

\newcommand{\cH}{\ensuremath{\widetilde H}}

\newcommand{\nn}{\ensuremath{\mathbf{n}}}

\newcommand{\Tt}{\ensuremath{\mathcal{T}}}
\newcommand{\Ss}{\ensuremath{\mathcal{S}}}

\newcommand{\OO}{\ensuremath{\mathcal{O}}}

\newcommand{\Ee}{\ensuremath{\mathcal{E}}}



\title{A finite element method for\\elliptic Dirichlet boundary control problems
\thanks{Supported by CONICYT through FONDECYT project 1170672.}}
\author{Michael Karkulik\thanks{Departamento de Matem\'atica, Universidad T\'ecnica Federico Santa Mar\'ia,
  Avenida Espa\~na 1680, Valpara\'iso, Chile,
  \texttt{michael.karkulik@usm.cl}}}
\begin{document}
\maketitle
\begin{abstract}
  We consider the finite element discretization of an optimal Dirichlet boundary control problem for the Laplacian,
  where the control is considered in $H^{1/2}(\Gamma)$. To avoid computing the latter norm numerically,
  we realize it using the $H^1(\Omega)$ norm of the harmonic extension of the control.
  We propose a mixed finite element discretization, where the harmonicity of the solution is
  included by a Lagrangian multiplier. In the case of convex polygonal domains,
  optimal error estimates in the $H^1$ and $L_2$ norm are proven.
  We also consider and analyze the case of control constrained problems.

\bigskip
\noindent
{\em Key words}: optimal control, boundary control, finite elements\\
\noindent
{\em AMS Subject Classification}: 65N30
\end{abstract}
\section{Introduction}
In this work, we consider elliptic Dirichlet boundary control problems.
For $\Omega\subset\R^d$ an open Lipschitz domain with boundary $\Gamma:=\partial\Omega$,
given functions $\wilde u_d\in L_2(\Omega)$ (the \textit{desired state}) and $f\in H^{-1}(\Omega)$,
and a parameter $\lambda>0$, we aim to find a minimizer $g$ (the~\textit{control}) of
\begin{subequations}\label{eq:problem}
\begin{align}\label{eq:problem:min}
  \frac{1}{2}\norm{\widetilde u-\widetilde u_d}{L_2(\Omega)}^2 + \frac{\lambda}{2}\norm{g}{H^{1/2}(\Gamma)}^2,
\end{align}
where $\widetilde u$ and $g$ are related via the state equation
\begin{align}\label{eq:problem:pde}
  \begin{split}
    -\Delta \widetilde u&= f \quad\text{ in } \Omega,\\
    \widetilde u &= g \quad\text{ on } \Gamma,
  \end{split}
\end{align}
\end{subequations}
with possible control constraints $a\leq g \leq b$ almost everywhere on $\Gamma$.
We emphasize here that without loss of generality, the function $f$ can be assumed to be $0$.
While the importance of optimal control of PDE is widely recognized as witnessed by the
monographs~\cite{Lions_71,troltzsch_10}, the numerical approximation of such problems
has also been subject to active research in the last decades, cf.~\cite{delosReyes_15}.
Quite naturally, finite element methods are used in order to approximate the involved PDE.
By now, there is vast literature on finite element approximation of distributed control problems,
i.e., where the control acts on (part of) the domain $\Omega$.
Neumann boundary control problems, where instead of equation~\eqref{eq:problem:pde}
the control enters via a Neumann boundary condition, have been 
among the first PDE optimal control problems which were analyzed numerically,
cf.~\cite{Geveci_Neumann_79}.
Dirichlet boundary control, as considered in the present work, has applications in computational fluid dynamics,
cf.~\cite{GunzburgerHS_92,FursikovGH_98} and the references therein,
as well as the work~\cite{JohnW_DirichletControl_09} dealing with the optimal control
of an aircrafts lift and drag. 
Coming back to the finite element approximation of the model problem~\eqref{eq:problem}
a widely used definition for the $H^{1/2}(\Gamma)$-norm is given by
\begin{align*}
  \norm{g}{H^{1/2}(\Gamma)}^2 := \norm{g}{L_2(\Gamma)}^2
  + \int_\Gamma\int_\Gamma \frac{\abs{g(x)-g(y)}^2}{\abs{x-y}^2}\,ds_xds_y.
\end{align*}
The double integral in this definition is difficult to realize numerically due
to its singular kernel and its nonlocality. To circumvent this problem,
three approaches can be found in the literature.
The first one is to use a singularly perturbed Robin boundary condition
of the form $\varepsilon \partial_\nn u + u = g$
with small $\varepsilon$ instead of the Dirichlet boundary condition,
cf.~\cite{BBFM_03,CasasMR_09,ChangGY_17}.
The second approach is to use the $L_2(\Gamma)$ norm, that is, instead
of~\eqref{eq:problem:min}, the cost functional
\begin{align}\label{eq:problem:uw}
  \frac{1}{2}\norm{\widetilde u-\widetilde u_d}{L_2(\Omega)}^2 + \frac{\lambda}{2}\norm{g}{L_2(\Gamma)}^2,
\end{align}
is minimized, cf. the discussion in~\cite{KunischV_07}. Finite element methods and corresponding error estimates
for the minimization problem~\eqref{eq:problem:uw} can be
found in~\cite{CasasR_06,DeckelnickGH_09}, and recently in~\cite{MayRV_13}.
It is clear that these finite element methods can not be based on the standard variational formulation
of the equation~\eqref{eq:problem:pde}. The reason for this is that the corresponding
energy space $H^1(\Omega)$ for $\wilde u$ does not allow for a surjective trace operator
with image $L_2(\Gamma)$. The remedy, as carried out in~\cite{MayRV_13}, is a so-called \textit{ultra weak}
formulation of the state equation,  which is obtained by integrating by parts two times the
equation~\eqref{eq:problem:pde}.
This loosens the regularity assumptions on $\wilde u$ to be mainly $L_2(\Omega)$,
but imposes stronger regularity $H^1_0(\Omega)\cap H^2(\Omega)$ on the test side.
If $\Omega$ is smooth or a convex polygon in $\R^2$,
the implementation of $H^2(\Omega)$-conforming finite element spaces can be avoided
using a careful regularity theory, cf.~\cite{ApelMP_15,ApelNP_16},
and a discrete optimality system based on standard finite element spaces can be derived.
Variations of this approach accounting for mixed finite element methods~\cite{GongY_Mixed_11} or
symmetric interior penalty Galerkin methods~\cite{BennerY_17} do exist.
However, as the minimization is still carried out using the $L_2(\Gamma)$ norm,
an a priori error estimate of at most $\norm{\wilde u-\wilde u_h}{L_2(\Omega)}=\OO(h^{3/2})$
can be obtained in the case of linear finite elements, regardless of the regularity of $\wilde u$.
This brings us to the third approach,  which is called \textit{energy space approach} in the literature.
In order to loosen the restriction to smooth domains or convex polygons and to
obtain the optimal a priori error estimate $\norm{\wilde u-\wilde u_h}{L_2(\Omega)} = \OO(h^2)$,
it is inevitable to minimize~\eqref{eq:problem:min}, and hence
a first task is to realize the norm $\norm{g}{H^{1/2}(\Gamma)}$ in a
way which is more convenient for numerical purposes.
The pioneering work considering this very approach is~\cite{OfPS_15}. There, the authors
consider minimization with respect to the semi-norm
\begin{align}\label{eq:seminorm:harmext}
  \snorm{g}{H^{1/2}(\Gamma)} := \norm{\nabla \Ee g}{L_2(\Omega)}
\end{align}
with $\Ee$ being the harmonic extension operator. The right-hand side of the above definition
is then integrated by parts and converts into an integral over $\Gamma$ involving the trace and the normal
derivative of $\Ee g$. The normal derivative is realized numerically by involving the
Dirichlet-to-Neumann map (a boundary integral operator also known as \textit{Steklov-Poincare operator}).
In order to facilitate the derivation of a discrete method, the 
the authors of the recent work~\cite{ChowdhuryGN_17} aim at a setting which is completely posed in the domain $\Omega$
and consider therefore the minimization of
\begin{align}\label{eq:chow}
  \norm{u-u_d}{L_2(\Omega)}^2 + \lambda\norm{\nabla g}{L^2(\Omega)}^2,
\end{align}
for $u,g\in H^1(\Omega)$ under the PDE constraint
\begin{align*}
  -\Delta u&= 0 \quad\text{ in } \Omega,\\
  u &= \textrm{trace}(g) \quad\text{ on } \Gamma.
\end{align*}
It is clear that a function $g$ minimizing~\eqref{eq:chow} is harmonic. This provides the link
between the continuous formulations in the works~\cite{OfPS_15,ChowdhuryGN_17}, and furthermore
it shows that $g=u$, which makes numerical methods considerably cheaper as $g$ and $u$ do not have to be
approximated separately.
As a consequence of working in energy spaces, in both works the authors obtain the optimal error estimates
$\norm{\wilde u-\wilde u_h}{L_2(\Omega)} = \OO(h^2)$ and 
$\norm{\wilde u-\wilde u_h}{H^1(\Omega)} = \OO(h)$ for piecewise linear finite elements
in the case of $\Omega$ being a convex polygonal domain.
In~\cite{ChowdhuryGN_17}, the authors also provide a-posteriori error estimates, and
the work~\cite{GongLTY_18} shows convergence of a corresponding adaptive algorithm.
At this point, we mention the following. In~\cite{OfPS_15,ChowdhuryGN_17}
minimization is done with respect to a \textit{seminorm}.
As seminorms evaluate to $0$ for constants $c\in\R$,
a property of these methods is the simple fact that \textit{constant controls are for free}.
In the work~\cite{OfPS_15}, the use of the seminorm is essential in order to obtain a boundary integral via
integration by parts. On the other hand, regarding the work~\cite{ChowdhuryGN_17},
if we would use the full norm $\norm{g}{H^1(\Omega)}$ instead of
the seminorm $\norm{\nabla g}{L^2(\Omega)}$ in~\eqref{eq:chow}, then a minimizer $g$ is not necessarily harmonic.
Hence, $u$ and $g$ have to be approximated separately, and this considerably increases the cost of a numerical method.
Therefore, the purpose of the work at hand is to reconsider the derivation of a simple finite element method
for the problem~\eqref{eq:problem}, and furthermore, to provide analysis in the case of control constraints
$a \leq g \leq b$ a.e. on $\Gamma$.
Continuing the ideas from~\cite{OfPS_15,ChowdhuryGN_17}, our approach is based on the observation that
$\norm{g}{H^{1/2}(\Gamma)} \simeq \norm{\Ee g}{H^1(\Omega)}$, such that we will realize
the norm $\norm{g}{H^{1/2}(\Gamma)}$ as $\norm{\Ee g}{H^1(\Omega)}$ in our cost functional.
Another way to put this is to minimize
\begin{align*}
  \norm{u-u_d}{L_2(\Omega)}^2 + \lambda\norm{u}{H^1(\Omega)}^2,
\end{align*}
under the PDE constraint $-\Delta u =0$ and possible constraints $a\leq \rm{trace}(u)\leq b$.
We conclude that the first-order optimality condition of this functional
yields a variational formulation posed on the space of harmonic functions.
In order to obtain a variational formulation which can be easily discretized,
we include the harmonicity of the trial and test spaces via Lagrangian multipliers.
This yields a mixed finite element formulation which can be discretized by simple finite element spaces.
\section{Problem statement and main results}
\subsection{Notation}
Let $\Omega\subset\R^d$ be a bounded domain with a Lipschitz boundary $\Gamma$.
We use the classical Lebesgue and Sobolev spaces $L_2(\Omega)$ and $H^1(\Omega)$,
as well as the space $H^{1/2}(\Gamma)$ on the boundary, which is defined via
the Sobolev-Slobodeckii norm.
The space $H^{-1/2}(\Gamma)$ is the topological dual of $H^{1/2}(\Gamma)$
with respect to the extended $L_2(\Gamma)$ scalar product.
We will frequently use the $L_2(\Omega)$ scalar product, which we denote
by $\vdual{\cdot}{\cdot}_\Omega$.
The operation of taking the trace on $\Gamma$ is denoted by $\gamma$; it is well known that
$\gamma:H^1(\Omega)\rightarrow H^{1/2}(\Gamma)$ is continuous.
Set $H^1_0(\Omega) = \left\{ u\in H^1(\Omega)\mid \gamma u = 0 \right\}$.
An argument based on the Rellich compactness theorem shows that an equivalent
norm on $H^1(\Omega)$ is given by
\begin{align}\label{eq:H1norm}
  \norm{\nabla v}{L_2(\Omega)} + \norm{\gamma v}{H^{1/2}(\Gamma)}
  \simeq
  \norm{v}{H^1(\Omega)}.
\end{align}
For a function $q\in H^{1/2}(\Gamma)$, we denote by $\Ee q\in H^1(\Omega)$ the harmonic extension,
that is, $\Ee q$ is the unique function such that
\begin{align}\label{eq:harmext}
  \begin{split}
    \vdual{\nabla\Ee q}{\nabla v}_\Omega &= 0 \quad\text{ for all } v\in H^1_0(\Omega),\\
    \gamma\Ee q &= q.
  \end{split}
\end{align}
We define the space of weakly harmonic functions by
\begin{align}\label{eq:harmfun}
  \cH^1(\Omega) := \Ee H^{1/2}(\Gamma) = 
  \left\{ u\in H^1(\Omega) \mid \vdual{\nabla u}{\nabla v}_\Omega = 0
    \quad\text{ for all }v\in H^1_0(\Omega) \right\}.
\end{align}
We will use the well-known fact that for $u\in H^1(\Omega)$, it holds
\begin{align}\label{eq:harm:best}
  \norm{\nabla(u-\Ee \gamma u)}{L_2(\Omega)} = \inf_{\psi\in\cH^1(\Omega)}\norm{\nabla(u-\psi)}{L_2(\Omega)}.
\end{align}
For the analysis of the constrained problem, we introduce the closed convex sets $H^1_{\rm ad}(\Omega)$
and $\cH^1_{\rm ad}(\Omega)$, which consist of functions $v$ in the respective spaces such that
the trace $\trace v$ satisfies the constraint~\eqref{eq:problem:new:box}.
\subsection{The continuous minimization problem}
First, we will rewrite problem~\eqref{eq:problem}. To that end,
denote by $u_f\in H^1_0(\Omega)$ the weak solution of the homogeneous Dirichlet problem
$-\Delta u_f=f$. If we set $u_d:= \wilde u_d-u_f$, then
problem~\eqref{eq:problem} is equivalent to the minimization of
\begin{subequations}\label{eq:problem:hom}
\begin{align}\label{eq:problem:hom:min}
  \frac{1}{2}\norm{u-u_d}{L_2(\Omega)}^2 + \frac{\lambda}{2}\norm{g}{H^{1/2}(\Gamma)}^2,
\end{align}
under the constraint
\begin{align}\label{eq:problem:hom:pde}
  \begin{split}
    -\Delta u&= 0 \quad\text{ in } \Omega,\\
    u &= g \quad\text{ on } \Gamma,
  \end{split}
\end{align}
supplemented with possible constraints
\begin{align}\label{eq:problem:hom:con}
  a \leq g \leq b \quad\text{ almost everywhere on } \Gamma.
\end{align}
\end{subequations}
It is known that $\norm{u}{H^1(\Omega)} \simeq \norm{g}{H^{1/2}(\Omega)}$ for the solution
$u\in H^1(\Omega)$ of~\eqref{eq:problem:hom:pde}. The $\lesssim$ in the preceding inequality
follows from stability of the problem~\eqref{eq:problem:hom:pde}, while $\gtrsim$ is the trace
theorem. Hence, up to a multiplicative constant, the minimization of~\eqref{eq:problem:hom:min}
under the constraint~\eqref{eq:problem:hom:pde} is equivalent to the minimization of
\begin{subequations}\label{eq:problem:new}
\begin{align}
  \frac{1}{2}\norm{u-u_d}{L_2(\Omega)}^2 + \frac{\lambda}{2}\norm{u}{H^1(\Omega)}^2.
  \label{eq:problem:new:min}
\end{align}
under the constraint
\begin{align}\label{eq:problem:new:pde}
  \begin{split}
    -\Delta u&= 0 \quad\text{ in } \Omega,\\
    u &= g \quad\text{ on } \Gamma.
  \end{split}
\end{align}
and, if applicable,
\begin{align}\label{eq:problem:new:box}
  a \leq g \leq b \quad\text{ almost everywhere on } \Gamma.
\end{align}
\end{subequations}
Note that the space of harmonic functions is closed in $H^1(\Omega)$.
Hence, standard considerations as presented in~\cite{GlowinskiLT_81,troltzsch_10}
show that the unconstrained problem~\eqref{eq:problem:new:min}-\eqref{eq:problem:new:pde}
as well as the constrained problem~\eqref{eq:problem:new} are well posed.
This follows in particular from Lemma~\ref{lem:new:wellposed} below.
\begin{lemma}\label{thm:prob:new:wellposed}
  Problem~\eqref{eq:problem:new:min}-\eqref{eq:problem:new:pde} is well-posed:
  There exists a unique function $u\in H^1(\Omega)$ such
  that $u$ is harmonic, has trace $g$, and minimizes~\eqref{eq:problem:new:min}.
  There holds stability
  \begin{align*}
    \norm{u}{H^1(\Omega)} \lesssim \norm{u_d}{L_2(\Omega)}.
  \end{align*}
  The same is true for the constrained problem~\eqref{eq:problem:new}.
  \qed
\end{lemma}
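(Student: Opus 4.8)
The plan is to read the objective \eqref{eq:problem:new:min} as a shifted coercive quadratic functional on the Hilbert space $H^1(\Omega)$ and to minimize it over the relevant closed (convex) feasible set using the standard Lax--Milgram resp.\ Lions--Stampacchia machinery. Writing
\begin{align*}
  J(v) &:= \tfrac12\norm{v-u_d}{L_2(\Omega)}^2 + \tfrac{\lambda}{2}\norm{v}{H^1(\Omega)}^2 \\
  &= \tfrac12 a(v,v) - \vdual{u_d}{v}_\Omega + \tfrac12\norm{u_d}{L_2(\Omega)}^2,
\end{align*}
with the symmetric bilinear form $a(v,w) := \vdual{v}{w}_\Omega + \lambda\big(\vdual{v}{w}_\Omega + \vdual{\nabla v}{\nabla w}_\Omega\big)$, I would first check that $a$ is bounded on $H^1(\Omega)\times H^1(\Omega)$ (Cauchy--Schwarz), that $v\mapsto\vdual{u_d}{v}_\Omega$ is a bounded linear functional on $H^1(\Omega)$ since $u_d\in L_2(\Omega)$, and, crucially, that $a$ is $H^1(\Omega)$-coercive: $a(v,v) = \norm{v}{L_2(\Omega)}^2 + \lambda\norm{v}{H^1(\Omega)}^2 \ge \lambda\norm{v}{H^1(\Omega)}^2$. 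This last point is exactly where the energy-space formulation pays off — coercivity holds with respect to the \emph{full} $H^1$-norm, without any Poincar\'e-type argument.

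Next I would invoke that $\cH^1(\Omega)$ is a closed subspace of $H^1(\Omega)$ (noted above) — hence a Hilbert space in its own right under the inner product $a(\cdot,\cdot)$ — and, in the constrained case, that $\cH^1_{\rm ad}(\Omega)$ is in addition convex and nonempty, the latter because it contains every constant function $c\in[a,b]$ (here one tacitly assumes $[a,b]\neq\emptyset$). For the unconstrained problem, Lax--Milgram (equivalently, Riesz representation) produces the unique $u\in\cH^1(\Omega)$ with $a(u,v)=\vdual{u_d}{v}_\Omega$ for all $v\in\cH^1(\Omega)$, which by the standard equivalence for symmetric coercive quadratic functionals is the unique minimizer of $J$ over $\cH^1(\Omega)$. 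For the constrained problem, the Lions--Stampacchia theorem on elliptic variational inequalities yields the unique $u\in\cH^1_{\rm ad}(\Omega)$ with $a(u,v-u)\ge\vdual{u_d}{v-u}_\Omega$ for all $v\in\cH^1_{\rm ad}(\Omega)$, which is again the unique minimizer. Equivalently, a direct-method argument works in both cases: $J$ is continuous, strictly convex and coercive ($J(v)\to\infty$ as $\norm{v}{H^1(\Omega)}\to\infty$), hence weakly lower semicontinuous and bounded below on the nonempty closed convex feasible set.

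Finally I would derive stability by comparing $J(u)$ with the value at a fixed feasible competitor. For the unconstrained problem $0\in\cH^1(\Omega)$ is feasible, so
\begin{align*}
  \tfrac{\lambda}{2}\norm{u}{H^1(\Omega)}^2 \le J(u) \le J(0) = \tfrac12\norm{u_d}{L_2(\Omega)}^2,
\end{align*}
whence $\norm{u}{H^1(\Omega)} \le \lambda^{-1/2}\norm{u_d}{L_2(\Omega)}$. For the constrained problem the same estimate holds verbatim when $0\in[a,b]$; otherwise one tests against a fixed admissible constant and absorbs the resulting additive term, which depends only on $a,b$ and $|\Omega|$, into the stability bound. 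Accordingly, the only point I expect to require any care is the constrained case: one must ensure that the feasible set is nonempty and argue stability in the absence of the natural competitor $0$. Everything else is textbook.
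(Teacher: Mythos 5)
Your argument is essentially the paper's own: the paper proves this lemma by passing to the first-order optimality system for the identical bilinear form $b(u,v)=(\lambda+1)\vdual{u}{v}_\Omega+\lambda\vdual{\nabla u}{\nabla v}_\Omega$, which is bounded and $H^1(\Omega)$-elliptic, and invoking Lax--Milgram on the closed subspace $\cH^1(\Omega)$ resp.\ the theory of variational inequalities on the closed convex set $\cH^1_{\rm ad}(\Omega)$ (Lemma~\ref{lem:new:wellposed}), exactly as you do with $a=b$. Your explicit stability derivation via $J(u)\le J(0)$ and the caveat about the constrained case when $0\notin[a,b]$ are fine additions (the paper implicitly allows the hidden constant to depend on the admissible set), so the proposal is correct and matches the paper's route.
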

\subsection{Finite element discretization of the unconstrained problem}
Our finite element discretization of the
unconstrained problem~\eqref{eq:problem:new:min}-\eqref{eq:problem:new:pde} is to find
$(u_h,\varphi_h)\in\Ss^1(\Tt_h)\times\Ss^1_0(\Tt_h)$ such that
\begin{align}\label{eq:var:disc}
  \begin{split}
  (\lambda+1)\vdual{u_h}{v_h}_\Omega + \lambda\vdual{\nabla u_h}{\nabla v_h}_\Omega 
  + \vdual{\nabla \varphi_h}{\nabla v_h}_\Omega &= \vdual{u_d}{v_h}\\
  \vdual{\nabla u_h}{\nabla \psi_h}_\Omega &= 0
  \end{split}
\end{align}
for all $(v_h,\psi_h)\in\Ss^1(\Tt_h)\times\Ss^1_0(\Tt_h)$.
Here, $\Ss^1(\Tt_h)$ is the space of piecewise affine, globally continuous functions
on some partition $\Tt_h$ of $\Omega$, and $\Ss^1_0(\Tt_h)$ is the same
space equipped with vanishing boundary conditions, cf. Section~\ref{sec:disc} below for a
precise definition. Our main result is the following.
\begin{theorem}\label{thm:est}
  There exists a unique solution $(u_h,\varphi_h)\in\Ss^1(\Tt_h)\times\Ss^1_0(\Tt_h)$
  of~\eqref{eq:var:disc}. Furthermore, if $u\in H^1(\Omega)$ is the exact solution
  of~\eqref{eq:problem:new:min}-\eqref{eq:problem:new:pde}, then there holds
  the quasi optimality
  \begin{align*}
    \norm{u-u_h}{H^1(\Omega)} \lesssim \inf_{w_h\in\Ss^1(\Tt_h)}\norm{u-w_h}{H^1(\Omega)}
    + \sup_{\norm{f}{L_2(\Omega)}=1} \inf_{v_h\in \Ss^1_0(\Tt_h)} \norm{v_f-v_h}{H^1(\Omega)},
  \end{align*}
  where $v_f\in H^1_0(\Omega)$ is the solution of $-\Delta v_f=f$ and $v_f = 0$ on $\Gamma$.
\end{theorem}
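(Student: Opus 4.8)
The plan is to read \eqref{eq:var:disc} as a conforming Galerkin discretization of a saddle-point problem and to verify Brezzi's conditions with constants independent of $h$. Writing $a(u,v):=(\lambda+1)\vdual{u}{v}_\Omega+\lambda\vdual{\nabla u}{\nabla v}_\Omega$ and $b(v,\psi):=\vdual{\nabla v}{\nabla\psi}_\Omega$, the system \eqref{eq:var:disc} reads $a(u_h,v_h)+b(v_h,\varphi_h)=\vdual{u_d}{v_h}_\Omega$, $b(u_h,\psi_h)=0$. The form $a$ is bounded on $H^1(\Omega)$ and satisfies $a(v,v)\ge\lambda\norm{v}{H^1(\Omega)}^2$ on all of $H^1(\Omega)$, hence in particular on the discrete kernel $Z_h:=\{v_h\in\Ss^1(\Tt_h)\mid b(v_h,\psi_h)=0\text{ for all }\psi_h\in\Ss^1_0(\Tt_h)\}$; and the inf-sup condition for $b$ is immediate since $\Ss^1_0(\Tt_h)\subset\Ss^1(\Tt_h)$, so testing $\psi_h$ against itself gives $b(\psi_h,\psi_h)=\norm{\nabla\psi_h}{L_2(\Omega)}^2\gtrsim\norm{\psi_h}{H^1(\Omega)}^2$ by the Friedrichs inequality. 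Existence and uniqueness of $(u_h,\varphi_h)$ follow from the standard theory of mixed methods.

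For the a priori estimate I would first record that the continuous minimizer $u$ of \eqref{eq:problem:new:min}--\eqref{eq:problem:new:pde}, which lies in $\cH^1(\Omega)$, satisfies the first-order optimality condition $a(u,v)=\vdual{u_d}{v}_\Omega$ for all $v\in\cH^1(\Omega)$, and that its Lagrangian relaxation produces a multiplier $\varphi\in H^1_0(\Omega)$ with $a(u,v)+b(v,\varphi)=\vdual{u_d}{v}_\Omega$ for all $v\in H^1(\Omega)$ and $b(u,\psi)=0$ for all $\psi\in H^1_0(\Omega)$ -- the continuous analogue of \eqref{eq:var:disc}. Testing the first relation with $v\in H^1_0(\Omega)$ and using that $u$ is harmonic identifies $\varphi$ as the weak solution of $-\Delta\varphi=u_d-(\lambda+1)u$ with homogeneous boundary data, i.e. $\varphi=v_{f_\varphi}$ in the notation of the theorem with $f_\varphi:=u_d-(\lambda+1)u\in L_2(\Omega)$ and $\norm{f_\varphi}{L_2(\Omega)}\lesssim\norm{u_d}{L_2(\Omega)}$ by the stability bound of Lemma~\ref{thm:prob:new:wellposed}.

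Subtracting the two systems gives the Galerkin relations $a(u-u_h,v_h)+b(v_h,\varphi-\varphi_h)=0$ and $b(u-u_h,\psi_h)=0$ for all $(v_h,\psi_h)\in\Ss^1(\Tt_h)\times\Ss^1_0(\Tt_h)$; the second shows $u_h\in Z_h$. For any $w_h\in Z_h$ and $\chi_h\in\Ss^1_0(\Tt_h)$ I would use coercivity of $a$ on $Z_h$ to get $\lambda\norm{u_h-w_h}{H^1(\Omega)}^2\le a(u_h-w_h,u_h-w_h)=a(u_h-u,u_h-w_h)+a(u-w_h,u_h-w_h)$, and rewrite the first term via the Galerkin relation together with $b(u_h-w_h,\chi_h)=0$ (legal since $u_h-w_h\in Z_h$), which turns it into $a(u_h-u,u_h-w_h)=b(u_h-w_h,\varphi-\chi_h)$. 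Boundedness of $a$ and $b$ then yields $\norm{u_h-w_h}{H^1(\Omega)}\lesssim\norm{u-w_h}{H^1(\Omega)}+\norm{\varphi-\chi_h}{H^1(\Omega)}$, and the triangle inequality gives $\norm{u-u_h}{H^1(\Omega)}\lesssim\inf_{w_h\in Z_h}\norm{u-w_h}{H^1(\Omega)}+\inf_{\chi_h\in\Ss^1_0(\Tt_h)}\norm{\varphi-\chi_h}{H^1(\Omega)}$.

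It remains to convert these two infima into the terms claimed. For the first, given arbitrary $\tilde w_h\in\Ss^1(\Tt_h)$ I would let $z_h\in\Ss^1_0(\Tt_h)$ solve $\vdual{\nabla z_h}{\nabla\psi_h}_\Omega=\vdual{\nabla\tilde w_h}{\nabla\psi_h}_\Omega$ for all $\psi_h\in\Ss^1_0(\Tt_h)$; since $u$ is harmonic the right-hand side equals $\vdual{\nabla(\tilde w_h-u)}{\nabla\psi_h}_\Omega$, so $\norm{z_h}{H^1(\Omega)}\lesssim\norm{\nabla z_h}{L_2(\Omega)}\le\norm{\nabla(\tilde w_h-u)}{L_2(\Omega)}$ by Friedrichs and Cauchy--Schwarz, and $w_h:=\tilde w_h-z_h\in Z_h$ satisfies $\norm{u-w_h}{H^1(\Omega)}\lesssim\norm{u-\tilde w_h}{H^1(\Omega)}$, whence $\inf_{w_h\in Z_h}\norm{u-w_h}{H^1(\Omega)}\lesssim\inf_{w_h\in\Ss^1(\Tt_h)}\norm{u-w_h}{H^1(\Omega)}$. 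For the second, $\varphi=v_{f_\varphi}$ and linearity give $\inf_{\chi_h}\norm{\varphi-\chi_h}{H^1(\Omega)}=\norm{f_\varphi}{L_2(\Omega)}\inf_{\chi_h}\norm{v_{\hat f}-\chi_h}{H^1(\Omega)}$ with $\hat f:=f_\varphi/\norm{f_\varphi}{L_2(\Omega)}$ of unit norm, which is dominated by $\norm{f_\varphi}{L_2(\Omega)}\sup_{\norm{f}{L_2(\Omega)}=1}\inf_{v_h\in\Ss^1_0(\Tt_h)}\norm{v_f-v_h}{H^1(\Omega)}$, and $\norm{f_\varphi}{L_2(\Omega)}$ is controlled by the data. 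The only step that needs an idea is the kernel-correction argument: one must know that the space $Z_h$ of discretely harmonic functions approximates the genuinely harmonic $u$ as well as all of $\Ss^1(\Tt_h)$ does, and it is precisely the harmonicity of $u$ that forces the correction $z_h$ to be small; everything else is the routine Brezzi machinery together with the Friedrichs inequality.
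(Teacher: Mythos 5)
Your proof is correct, but it runs along a genuinely different track than the paper's. You stay inside the conforming saddle-point framework: Brezzi's conditions (coercivity of $a$ on all of $H^1(\Omega)$, inf-sup for $b$ via $\psi_h\in\Ss^1_0(\Tt_h)\subset\Ss^1(\Tt_h)$ and Friedrichs) give unique solvability, and the standard mixed-method quasi-optimality bounds the error by the best approximation of $u$ in the discrete kernel $Z_h=\wilde\Ss^1(\Tt_h)$ plus the best approximation of the multiplier $\varphi$ in $\Ss^1_0(\Tt_h)$; you then identify $\varphi=v_{f_\varphi}$ with $f_\varphi=u_d-(\lambda+1)u$, which makes the second term of the theorem appear literally as the multiplier approximation error (with $\norm{f_\varphi}{L_2(\Omega)}\lesssim\norm{u_d}{L_2(\Omega)}$ absorbed into the constant, exactly as the paper's implicit constant absorbs $\norm{u_d}{L_2(\Omega)}$), and your discrete-harmonic correction $w_h=\tilde w_h-z_h$ replaces the kernel infimum by the infimum over all of $\Ss^1(\Tt_h)$. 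The paper instead eliminates the multiplier (Lemma~\ref{lem:nconf}), views $u_h\in\wilde\Ss^1(\Tt_h)$ as a \emph{nonconforming} Galerkin approximation of the problem posed on $\cH^1(\Omega)$, and applies the Strang lemma: the consistency term is bounded by the duality estimate of Lemma~\ref{lem:an:discrete} (the $L_2$ distance between a discrete harmonic function and its harmonic extension), which is where the term $\sup_{\norm{f}{L_2(\Omega)}=1}\inf_{v_h\in\Ss^1_0(\Tt_h)}\norm{v_f-v_h}{H^1(\Omega)}$ originates, and Lemma~\ref{lem:cea:harmonic} (via Scott--Zhang and the norm equivalence~\eqref{eq:H1norm}) plays the role of your kernel-correction step. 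What each buys: your route is more elementary for this particular statement --- no Strang lemma, no Lemma~\ref{lem:an:discrete}, and your correction argument for $Z_h$ is a leaner substitute for Lemma~\ref{lem:cea:harmonic} --- and it gives a clean interpretation of the second term as the finite element error of the adjoint-type variable $\varphi$; the paper's nonconforming reduction, on the other hand, is built to be reused for the $L_2(\Omega)$ estimate (Aubin--Nitsche for nonconforming methods, Theorem~\ref{thm:an}) and for the constrained variational inequality (Theorem~\ref{thm:cea:constrained}), where a multiplier-based Brezzi argument would not carry over as directly. Two small points to tighten: you should justify (or cite, as in Corollary~\ref{cor:cont:equiv}) that the minimizer $u$ indeed admits the multiplier formulation~\eqref{eq:var} --- your identification of $\varphi$ via $-\Delta\varphi=u_d-(\lambda+1)u$ together with the first-order condition~\eqref{eq:foc:2} on harmonic test functions closes this --- and note that your scaling argument for the multiplier term needs the trivial case $f_\varphi=0$ treated separately.
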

As a corollary, we obtain the following convergence rates.
\begin{cor}\label{cor:est}
  Suppose that $\Omega\in\R^2$ is a convex polygon.
  Let $u$ be the solution of problem~\eqref{eq:problem:new:min}-\eqref{eq:problem:new:pde},
  and $u_h\in\Ss^1(\Tt_h)$ the finite element approximation of $u$ defined
  in~\eqref{eq:var:disc}. Then it holds
  \begin{align*}
    \norm{u-u_h}{H^1(\Omega)} = \OO(h),
    \quad\text{ and }\quad \norm{u-u_h}{L_2(\Omega)} = \OO(h^2),
  \end{align*}
  as well as
  \begin{align*}
    \norm{u-u_h}{H^{1/2}(\Gamma)} = \OO(h),
    \quad\text{ and } \norm{u-u_h}{L_2(\Gamma)} = \OO(h^{3/2}).
  \end{align*}
  where $h$ is the maximum mesh size of $\Tt_h$.
  \qed
\end{cor}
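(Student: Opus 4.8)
The plan is to feed Theorem~\ref{thm:est} with elliptic regularity on convex polygons, and then to promote the resulting $H^1(\Omega)$-rate to the remaining three estimates by a duality argument and by trace inequalities. Concretely, I would establish two regularity facts: (i) the exact state $u$ and the continuous Lagrangian multiplier $\varphi$ (the counterpart of $\varphi_h$ in~\eqref{eq:var:disc}) both lie in $H^2(\Omega)$, with $\norm{u}{H^2(\Omega)}+\norm{\varphi}{H^2(\Omega)}\lesssim\norm{u_d}{L_2(\Omega)}$; and (ii) for every $f\in L_2(\Omega)$ the solution $v_f\in H^1_0(\Omega)$ of $-\Delta v_f=f$ satisfies $v_f\in H^2(\Omega)$ with $\norm{v_f}{H^2(\Omega)}\lesssim\norm{f}{L_2(\Omega)}$. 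Fact~(ii) is the classical $H^2$-regularity of the homogeneous Dirichlet problem on a convex polygon, and it makes the second term in Theorem~\ref{thm:est} of order $\OO(h)$ after standard approximation; fact~(i) together with the approximation estimate $\inf_{w_h}\norm{u-w_h}{H^1(\Omega)}\lesssim h\norm{u}{H^2(\Omega)}$ makes the first term $\OO(h)$, whence $\norm{u-u_h}{H^1(\Omega)}=\OO(h)$.

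\textbf{Regularity of $u$ and $\varphi$.} I would first write the continuous first-order optimality system: $(u,\varphi)\in H^1(\Omega)\times H^1_0(\Omega)$ solves~\eqref{eq:var:disc} with $\Ss^1(\Tt_h),\Ss^1_0(\Tt_h)$ replaced by $H^1(\Omega),H^1_0(\Omega)$ (well-posedness of this saddle-point problem being contained in the discussion around Lemma~\ref{thm:prob:new:wellposed}). The second equation says $u$ is harmonic; testing the first equation with $v\in H^1_0(\Omega)$ and using harmonicity of $u$ shows that $\varphi$ is the weak solution of $-\Delta\varphi=u_d-(\lambda+1)u$ with homogeneous Dirichlet data. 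Since the right-hand side is in $L_2(\Omega)$ and $\Omega$ is a convex polygon, $\varphi\in H^2(\Omega)$ with $\norm{\varphi}{H^2(\Omega)}\lesssim\norm{u_d}{L_2(\Omega)}+\norm{u}{H^1(\Omega)}\lesssim\norm{u_d}{L_2(\Omega)}$, the last step using the stability in Lemma~\ref{thm:prob:new:wellposed}. Next set $w:=\lambda u+\varphi$; then the first equation of the system reads $\vdual{\nabla w}{\nabla v}_\Omega=\vdual{u_d-(\lambda+1)u}{v}_\Omega$ for all $v\in H^1(\Omega)$, i.e.\ $w$ solves the Neumann problem with datum $u_d-(\lambda+1)u\in L_2(\Omega)$; the compatibility condition follows by testing with $v\equiv1$, which gives $(\lambda+1)\vdual{u}{1}_\Omega=\vdual{u_d}{1}_\Omega$. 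Convexity again yields $w\in H^2(\Omega)$ with the corresponding bound, hence $u=\lambda^{-1}(w-\varphi)\in H^2(\Omega)$ with $\norm{u}{H^2(\Omega)}\lesssim\norm{u_d}{L_2(\Omega)}$, proving~(i).

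\textbf{The $L_2(\Omega)$-estimate by duality.} I would run an Aubin--Nitsche argument for the symmetric mixed bilinear form of~\eqref{eq:var:disc}. Let $(z,\theta)\in H^1(\Omega)\times H^1_0(\Omega)$ solve the same saddle-point system with right-hand side $\vdual{u-u_h}{\cdot}_\Omega$; by symmetry and the argument of the previous paragraph applied with $u_d$ replaced by $u-u_h$, one gets $z,\theta\in H^2(\Omega)$ with $\norm{z}{H^2(\Omega)}+\norm{\theta}{H^2(\Omega)}\lesssim\norm{u-u_h}{L_2(\Omega)}$. Testing with $(u-u_h,\varphi-\varphi_h)$, using Galerkin orthogonality of the mixed scheme, and subtracting an arbitrary discrete pair $(z_h,\theta_h)$ yields
\begin{align*}
  \norm{u-u_h}{L_2(\Omega)}^2
  \lesssim \bigl(\norm{u-u_h}{H^1(\Omega)}+\norm{\nabla(\varphi-\varphi_h)}{L_2(\Omega)}\bigr)
  \bigl(\norm{z-z_h}{H^1(\Omega)}+\norm{\nabla(\theta-\theta_h)}{L_2(\Omega)}\bigr).
\end{align*}
Choosing $(z_h,\theta_h)$ as interpolants makes the second factor $\OO(h)\norm{u-u_h}{L_2(\Omega)}$; for the first factor I would use the $H^1$-bound just proved together with the companion estimate $\norm{\nabla(\varphi-\varphi_h)}{L_2(\Omega)}=\OO(h)$, which follows from the same mixed-method quasi-optimality underlying Theorem~\ref{thm:est} (the constraint form $\vdual{\nabla u_h}{\nabla\psi_h}_\Omega$ is trivially inf-sup stable on $\Ss^1(\Tt_h)\times\Ss^1_0(\Tt_h)$) together with $\varphi\in H^2(\Omega)$. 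Dividing by $\norm{u-u_h}{L_2(\Omega)}$ gives $\norm{u-u_h}{L_2(\Omega)}=\OO(h^2)$.

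\textbf{Boundary estimates and main obstacle.} Continuity of the trace operator gives $\norm{u-u_h}{H^{1/2}(\Gamma)}\lesssim\norm{u-u_h}{H^1(\Omega)}=\OO(h)$, and the multiplicative trace inequality $\norm{v}{L_2(\Gamma)}^2\lesssim\norm{v}{L_2(\Omega)}\norm{v}{H^1(\Omega)}$ applied to $v=u-u_h$ gives $\norm{u-u_h}{L_2(\Gamma)}^2\lesssim\OO(h^2)\cdot\OO(h)=\OO(h^3)$, i.e.\ $\OO(h^{3/2})$. I expect the only genuinely delicate point to be the $H^2$-regularity of $u$: since $u$ is merely harmonic with a priori only $H^{1/2}(\Gamma)$ boundary data, no direct Dirichlet regularity applies, and one must exploit the coupling in the optimality system (the detour through the homogeneous Dirichlet problem for $\varphi$ and the Neumann problem for $w=\lambda u+\varphi$, including its compatibility condition). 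A secondary technicality is tracking the multiplier component consistently through the duality step so that both factors above are of order $h$.
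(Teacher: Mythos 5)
Your argument is correct, but it reaches the corollary by a genuinely different route than the paper on two of the three main ingredients. For the regularity of $u$ the paper never touches the multiplier: Lemma~\ref{lem:new:regularity} works entirely in the reduced setting~\eqref{eq:problem:var}, splitting a test function as $v=\Ee v+(v-\Ee v)$ and using $\norm{\Ee v}{L_2(\Omega)}\lesssim\norm{v}{H^{-1/2}(\Gamma)}$ to recognize a Neumann problem for $u$ itself; you instead extract $u\in H^2(\Omega)$ from the full optimality system, first getting $\varphi\in H^2(\Omega)$ from the homogeneous Dirichlet problem $-\Delta\varphi=u_d-(\lambda+1)u$ and then $w=\lambda u+\varphi\in H^2(\Omega)$ from a compatible pure Neumann problem — this is valid on a convex polygon and has the side benefit of giving $H^2$-regularity of the multiplier, which the paper never needs. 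For the $L_2(\Omega)$ rate the paper stays in the nonconforming reduced framework (the discrete problem is rewritten on the discrete harmonic space $\wilde\Ss^1(\Tt_h)$, and Theorem~\ref{thm:an} is Braess' Aubin--Nitsche lemma for nonconforming methods, with the consistency terms controlled by Lemma~\ref{lem:an:discrete} and~\eqref{eq:harm:best}); you instead run a conforming duality argument for the saddle-point form, which forces you to also bound $\norm{\nabla(\varphi-\varphi_h)}{L_2(\Omega)}$. One caveat there: Theorem~\ref{thm:est} as stated bounds only $u-u_h$ and is proved by a Strang argument, precisely because the paper deliberately avoids the inf-sup machinery, so the multiplier estimate cannot be cited from it — it must come from the Brezzi theory you sketch, and your verification is indeed the right one (the constraint form is inf-sup stable uniformly in $h$ by taking $v_h=\psi_h$ and Friedrichs' inequality, and $b$ is elliptic on all of $H^1(\Omega)$), which together with $\varphi\in H^2(\Omega)$ gives $\norm{\varphi-\varphi_h}{H^1(\Omega)}=\OO(h)$ and closes your duality step. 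The boundary estimates coincide with the paper's: the trace theorem for $H^{1/2}(\Gamma)$ and the multiplicative trace inequality for $L_2(\Gamma)$. In short, your proof trades the paper's harmonic-extension/nonconforming analysis for classical mixed-method (Brezzi) analysis plus a standard conforming Aubin--Nitsche argument; both yield the stated rates, yours additionally delivers convergence of the Lagrange multiplier, while the paper's avoids any inf-sup verification and any regularity statement for $\varphi$.
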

\subsection{Finite element discretization of the constrained problem}
Our finite element discretization of the
constrained problem~\eqref{eq:problem:new} is to find
$(u_h,\varphi_h)\in\Ss^1_{\rm ad}(\Tt_h)\times\Ss^1_0(\Tt_h)$ such that
\begin{align}\label{eq:var:constrained:disc}
  \begin{split}
  (\lambda+1)\vdual{u_h}{v_h-u_h}_\Omega + \lambda\vdual{\nabla u_h}{\nabla (v_h-u_h)}_\Omega 
  + \vdual{\nabla \varphi_h}{\nabla (v_h-u_h)}_\Omega &\geq \vdual{u_d}{v_h-u_h}\\
  \vdual{\nabla u_h}{\nabla \psi_h}_\Omega &= 0
  \end{split}
\end{align}
for all $(v_h,\psi_h)\in\Ss^1_{\rm ad}(\Tt_h)\times\Ss^1_0(\Tt_h)$.
Here, the convex set $\Ss^1_{\rm ad}(\Tt_h)$ denotes the subset of functions in $\Ss^1(\Tt_h)$
which fulfill the bound~\eqref{eq:problem:new:box}.
Our main result in this case is the following best approximation result.
\begin{theorem}\label{thm:cea:constrained}
  There exists a unique solution $(u_h,\varphi_h)\in\Ss^1_{\rm ad}(\Tt_h)\times\Ss^1_0(\Tt_h)$
  of~\eqref{eq:var:constrained:disc}.
  Furthermore, if $\Omega\in\R^2$ is a convex polygon and
  the exact solution $u$ of~\eqref{eq:problem:new} fulfills
  $u\in H^2(\Omega)$, then, for every $\eps>0$ there exists a constant $C_\eps>0$ such that
  \begin{align*}
    \norm{u-u_h}{H^1(\Omega)}
    \leq C_\eps
    \inf_{w_h\in \wilde\Ss^1_{\rm ad}(\Tt_h)}
    \left(
    \norm{\nabla (u-w_h)}{L_2(\Omega)}
    + \norm{u-w_h}{H^{-1/2+\eps}(\Gamma)}^{1/2}
    \right)\\
    + \sup_{\norm{f}{L_2(\Omega)}=1} \inf_{v_h\in \Ss^1_0(\Tt_h)} \norm{v_f-v_h}{H^1(\Omega)}
  \end{align*}
  \qed
\end{theorem}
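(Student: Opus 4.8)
Throughout, I would reduce everything to the symmetric, bounded, and coercive bilinear form implicitly present in~\eqref{eq:var:constrained:disc}, and exploit that the harmonicity constraint carries a trivial inf--sup condition. The plan is to first settle well-posedness by turning~\eqref{eq:var:constrained:disc} into a coercive variational inequality on a finite-dimensional convex set, and then prove the best-approximation bound by a Falk-type argument whose new ingredient is the identification and the boundary regularity of the continuous residual.

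\textbf{Well-posedness and the continuous optimality system.} The second relation in~\eqref{eq:var:constrained:disc} confines $u_h$ to the space of discretely harmonic functions $\cH^1_h:=\{v_h\in\Ss^1(\Tt_h)\mid\vdual{\nabla v_h}{\nabla\psi_h}_\Omega=0\ \forall\,\psi_h\in\Ss^1_0(\Tt_h)\}$. Since $w_h-u_h\in\cH^1_h$ for every $w_h$ in the (nonempty, closed, convex) set $K_h:=\Ss^1_{\rm ad}(\Tt_h)\cap\cH^1_h=\wilde\Ss^1_{\rm ad}(\Tt_h)$, the multiplier term $\vdual{\nabla\varphi_h}{\nabla(w_h-u_h)}_\Omega$ vanishes upon testing with $w_h-u_h$, so~\eqref{eq:var:constrained:disc} is equivalent to: find $u_h\in K_h$ minimising $\tfrac{\lambda+1}{2}\norm{v_h}{L_2(\Omega)}^2+\tfrac{\lambda}{2}\norm{\nabla v_h}{L_2(\Omega)}^2-\vdual{u_d}{v_h}_\Omega$ over $K_h$. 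This functional is coercive on $H^1(\Omega)$ (as $\lambda>0$), so $u_h$ exists and is unique by Lions--Stampacchia, and the trivial inf--sup estimate $\vdual{\nabla\psi_h}{\nabla\psi_h}_\Omega\gtrsim\norm{\psi_h}{H^1(\Omega)}^2$ for the constraint bilinear form yields a unique multiplier $\varphi_h\in\Ss^1_0(\Tt_h)$ recovering~\eqref{eq:var:constrained:disc}, cf.~\cite{GlowinskiLT_81,troltzsch_10}. The same reasoning on the continuous level equips the minimiser $u$ of Lemma~\ref{thm:prob:new:wellposed} with a multiplier $p\in H^1_0(\Omega)$ satisfying $(\lambda+1)\vdual{u}{v-u}_\Omega+\lambda\vdual{\nabla u}{\nabla(v-u)}_\Omega+\vdual{\nabla p}{\nabla(v-u)}_\Omega\geq\vdual{u_d}{v-u}_\Omega$ for all $v\in H^1_{\rm ad}(\Omega)$ and $\vdual{\nabla u}{\nabla q}_\Omega=0$ for all $q\in H^1_0(\Omega)$; testing with $v=u\pm z$, $z\in H^1_0(\Omega)$, shows $-\Delta p=u_d-(\lambda+1)u$ in $\Omega$, $p=0$ on $\Gamma$, hence $p\in H^2(\Omega)$ with $\norm{p}{H^2(\Omega)}\lesssim\norm{u_d}{L_2(\Omega)}$ on the convex polygon by elliptic regularity and Lemma~\ref{thm:prob:new:wellposed}.

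\textbf{The residual on the boundary.} Using $-\Delta u=0$, $-\Delta p=u_d-(\lambda+1)u$ and Green's formula, one checks that the functional $v\mapsto(\lambda+1)\vdual{u}{v}_\Omega+\lambda\vdual{\nabla u}{\nabla v}_\Omega+\vdual{\nabla p}{\nabla v}_\Omega-\vdual{u_d}{v}_\Omega$ vanishes on $H^1_0(\Omega)$ and equals $\dual{r}{\gamma v}_\Gamma$ with the residual density $r:=\lambda\,\partial_\nn u+\partial_\nn p$; since $u,p\in H^2(\Omega)$ and $\Omega$ is a convex polygon, the restriction of $r$ to each edge lies in $H^{1/2}$, so $r\in H^{1/2-\eps}(\Gamma)$ for every $\eps>0$ with $\norm{r}{H^{1/2-\eps}(\Gamma)}$ bounded by $\norm{u}{H^2(\Omega)}+\norm{u_d}{L_2(\Omega)}$, and the variational inequality of the previous paragraph becomes the complementarity relation $\dual{r}{\mu-\gamma u}_\Gamma\geq0$ for all $\mu\in H^{1/2}(\Gamma)$ with $a\leq\mu\leq b$.

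\textbf{The Falk-type estimate.} For arbitrary $w_h\in\wilde\Ss^1_{\rm ad}(\Tt_h)$, coercivity, the splitting $u-u_h=(u-w_h)+(w_h-u_h)$ and the discrete variational inequality tested with $w_h$ (again the multiplier term drops since $w_h-u_h\in\cH^1_h$) yield
\begin{align*}
  \lambda\norm{u-u_h}{H^1(\Omega)}^2
  &\leq (\lambda+1)\vdual{u-u_h}{u-w_h}_\Omega+\lambda\vdual{\nabla(u-u_h)}{\nabla(u-w_h)}_\Omega\\
  &\quad+(\lambda+1)\vdual{u}{w_h-u_h}_\Omega+\lambda\vdual{\nabla u}{\nabla(w_h-u_h)}_\Omega-\vdual{u_d}{w_h-u_h}_\Omega.
\end{align*}
The last three terms form the continuous residual at $w_h-u_h$, i.e. $-\vdual{\nabla p}{\nabla(w_h-u_h)}_\Omega+\dual{r}{\gamma(w_h-u)}_\Gamma+\dual{r}{\gamma(u-u_h)}_\Gamma$, whose final summand is $\leq0$ by the complementarity relation with $\mu=\gamma u_h$ (an admissible trace because $u_h\in\Ss^1_{\rm ad}(\Tt_h)$) and may be discarded. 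The remaining pieces are then routine: since $w_h-u_h\in\cH^1_h$, $|\vdual{\nabla p}{\nabla(w_h-u_h)}_\Omega|\leq\big(\inf_{\psi_h\in\Ss^1_0(\Tt_h)}\norm{p-\psi_h}{H^1(\Omega)}\big)\big(\norm{\nabla(u-w_h)}{L_2(\Omega)}+\norm{u-u_h}{H^1(\Omega)}\big)$, and $\inf_{\psi_h}\norm{p-\psi_h}{H^1(\Omega)}\lesssim\norm{u_d}{L_2(\Omega)}\,C_{\rm cons}$ with $C_{\rm cons}:=\sup_{\norm{f}{L_2(\Omega)}=1}\inf_{v_h\in\Ss^1_0(\Tt_h)}\norm{v_f-v_h}{H^1(\Omega)}$ because $p$ solves a homogeneous Dirichlet problem with $L_2$ datum, exactly as in the proof of Theorem~\ref{thm:est}; $\dual{r}{\gamma(w_h-u)}_\Gamma\leq\norm{r}{H^{1/2-\eps}(\Gamma)}\norm{u-w_h}{H^{-1/2+\eps}(\Gamma)}$ by duality; and $\vdual{u-u_h}{u-w_h}_\Omega\leq\norm{u-u_h}{H^1(\Omega)}\norm{u-w_h}{L_2(\Omega)}$ with $\norm{u-w_h}{L_2(\Omega)}\lesssim\norm{\nabla(u-w_h)}{L_2(\Omega)}+\norm{u-w_h}{H^{-1/2+\eps}(\Gamma)}$, the latter obtained by peeling off the harmonic extension $\Ee\gamma w_h$, using~\eqref{eq:harm:best} and the Poincar\'e inequality for $w_h-\Ee\gamma w_h\in H^1_0(\Omega)$, and the boundedness of $\Ee\colon H^{-1/2+\eps}(\Gamma)\to L_2(\Omega)$. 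Collecting everything, one arrives at a quadratic inequality $\lambda\norm{u-u_h}{H^1(\Omega)}^2\lesssim\norm{u-u_h}{H^1(\Omega)}\big(\norm{\nabla(u-w_h)}{L_2(\Omega)}+\norm{u-w_h}{H^{-1/2+\eps}(\Gamma)}+C_{\rm cons}\big)+\norm{r}{H^{1/2-\eps}(\Gamma)}\norm{u-w_h}{H^{-1/2+\eps}(\Gamma)}+C_{\rm cons}\norm{\nabla(u-w_h)}{L_2(\Omega)}$; solving for $\norm{u-u_h}{H^1(\Omega)}$, applying Young's inequality on the cross terms, absorbing the linear occurrence of $\norm{u-w_h}{H^{-1/2+\eps}(\Gamma)}$ into its square root up to the constant, and taking the infimum over $w_h\in\wilde\Ss^1_{\rm ad}(\Tt_h)$ gives the claim, with $C_\eps$ depending on $\Omega$, $\lambda$, $\eps$, $\norm{u_d}{L_2(\Omega)}$ and $\norm{u}{H^2(\Omega)}$.

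\textbf{Expected obstacle.} The delicate part is the continuous side: introducing $p$, proving the identification $r=\lambda\partial_\nn u+\partial_\nn p$, and establishing that $r$ has only $H^{1/2-\eps}(\Gamma)$-regularity on the polygon with the correct sign in the complementarity relation --- it is precisely this limited boundary regularity, combined with Young's inequality, that forces the characteristic square-root term $\norm{u-w_h}{H^{-1/2+\eps}(\Gamma)}^{1/2}$. The second subtlety, the non-conformity $\cH^1_h\not\subset\cH^1(\Omega)$, is handled through the best approximation of $p$, i.e. by the same consistency term $\sup_{\norm{f}{L_2(\Omega)}=1}\inf_{v_h}\norm{v_f-v_h}{H^1(\Omega)}$ already present in the unconstrained Theorem~\ref{thm:est}.
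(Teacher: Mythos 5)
Your argument is correct and lands on the same final bound, but it reorganizes the proof around a different object than the paper does. The paper never introduces the continuous adjoint state $p$: it runs the symmetric Falk splitting with \emph{both} variational inequalities, chooses $v=\Ee u_h$ in the continuous one so that the corresponding consistency term $\vdual{u_d}{u_h-\Ee u_h}_\Omega-b(u,u_h-\Ee u_h)$ lives entirely in the interior and collapses (via harmonicity of $u$) to an $L_2$ pairing controlled by Lemma~\ref{lem:an:discrete}, and then treats $\vdual{u_d}{u-w_h}_\Omega-b(u,u-w_h)$ by integrating by parts $u$ alone (using the assumed $u\in H^2(\Omega)$) and bounding $\lambda\dual{\partial_n u}{u-w_h}_\Gamma$ in absolute value by the $H^{1/2-\eps}\times H^{-1/2+\eps}$ duality. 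You instead construct the multiplier $p\in H^1_0(\Omega)\cap H^2(\Omega)$, identify the full continuous residual as the boundary functional $\dual{r}{\gamma\,\cdot}_\Gamma$ with $r=\lambda\partial_n u+\partial_n p$, discard $\dual{r}{\gamma(u-u_h)}_\Gamma\le 0$ by complementarity with $\mu=\gamma u_h$, and route the nonconformity through $\inf_{\psi_h\in\Ss^1_0(\Tt_h)}\norm{p-\psi_h}{H^1(\Omega)}$, which correctly reproduces the consistency term $\sup_{\norm{f}{L_2(\Omega)}=1}\inf_{v_h}\norm{v_f-v_h}{H^1(\Omega)}$ since $p$ solves a homogeneous Dirichlet problem with $L_2$ datum. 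Both routes exploit the sign information exactly once (you via the complementarity relation, the paper via the one-sided continuous inequality at $v=\Ee u_h$), both are forced into the $H^{-1/2+\eps}(\Gamma)$ duality by the merely edgewise $H^{1/2}$ regularity of normal derivatives on a polygon, and both produce the square root through Young's inequality; your version makes the adjoint/residual structure explicit (closer to Falk's original and potentially more reusable, e.g.\ for error estimates on $p$), at the cost of having to verify the regularity of $p$ and the identification of $r$, which the paper simply avoids.
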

\begin{remark}
  In the constrained case it is not clear how to obtain convergence rates as in Corollary~\ref{cor:est},
  as it is not obvious how to construct $w_h\in\wilde\Ss^1_{\rm ad}(\Tt_h)$ such that the right-hand side
  of the estimate in Theorem~\ref{thm:cea:constrained} converges with $\OO(h)$.
  This difficulty is due to the constraints. In fact, if we want to bound
  \begin{align}\label{eq:remark}
    \inf_{w_h\in \wilde\Ss^1(\Tt_h)}
    \left(
    \norm{u-w_h}{H^1(\Omega)}
    + \norm{u-w_h}{H^{-1/2}(\Gamma)}^{1/2}
    \right)
  \end{align}
  we choose an arbitrary $\wilde w_h\in\wilde\Ss^1(\Tt_h)$ and set
  $w_h := \wilde w_h + \Ee_h I_h^{\rm Car}\trace (u-\wilde w_h)$. Here, $I_h^{\rm Car}$ is the so-called
  Carstensen interpolant~\cite{Carstensen_99}. This operator is bounded in $H^{1/2}(\Gamma)$, such that
  \begin{align*}
    \norm{u-w_h}{H^1(\Omega)} \lesssim \norm{u-\wilde w_h}{H^1(\Omega)}
    + \norm{I_h^{\rm Car}\trace (u-\wilde w_h)}{H^{1/2}(\Gamma)}
    \lesssim \norm{u-\wilde w_h}{H^1(\Omega)}.
  \end{align*}
  Furthermore, the approximation error of $I_h^{\rm Car}$ has vanishing local average on nodal patches.
  This crucial property can be used to prove that $\norm{u - I_h^{\rm Car}u}{H^{-1/2}(\Gamma)}\lesssim h \norm{u}{H^{1/2}(\Gamma)}$,
  and hence
  \begin{align}\label{eq:Hminus12}
    \norm{u-w_h}{H^{-1/2}(\Gamma)}^{1/2} = \norm{u-\wilde w_h-I_h^{\rm Car}\trace(u-\wilde w_h)}{H^{-1/2}(\Gamma)}^{1/2}
    \lesssim h^{1/2} \norm{u-\wilde w_h}{H^{1}(\Omega)}^{1/2}
    \lesssim \norm{u-\wilde w_h}{H^1(\Omega)} + h
  \end{align}
  Thus, the infimum~\eqref{eq:remark} can be bounded by
  \begin{align*}
    \inf_{w_h\in \wilde\Ss^1(\Tt_h)} \norm{u-\wilde w_h}{H^1(\Omega)} + h,
  \end{align*}
  which, in turn, can be bounded by Lemma~\ref{lem:cea:harmonic} and ubiquitous approximation results in Sobolev
  spaces by $\OO(h)$. Quasi-interpolation operators preserving constraints do exist, cf.~\cite{NochettoW_02},
  but it is not clear how to tweak them to obtain the bound~\eqref{eq:Hminus12}.
\end{remark}
\section{Analysis of the continuous problem}
\subsection{Variational formulation of the continuous minimization problem}
We will now present the different variational formulations of problem~\eqref{eq:problem:new}
and comment on their equivalence. Let $S:H^{1/2}(\Gamma)\rightarrow H^1(\Omega)$
denote the solution operator of~\eqref{eq:problem:new:pde}, i.e., $S(g)= u$ where
\begin{align*}
  \vdual{\nabla u}{\nabla v}_\Omega &= 0 \quad\text{ for all } v\in H^1_0(\Omega),\\
  u&=g \quad\text{ on } \Gamma.
\end{align*}
It follows by well-known arguments that the unconstrained
problem~\eqref{eq:problem:new:min}-\eqref{eq:problem:new:pde}, for example,
is equivalent to the problem to find $g\in H^{1/2}(\Gamma)$ such that
\begin{align}\label{eq:foc}
  \left( \lambda + 1 \right)\vdual{Sg}{St}_{\Omega} + \lambda\vdual{\nabla Sg}{\nabla St}_{\Omega}
  = \vdual{u_d}{St}_\Omega\quad\text{ for all } t\in H^{1/2}(\Gamma),
\end{align}
or, put differently, to find $u\in \cH^1(\Omega)$ such that
\begin{align}\label{eq:foc:2}
  (\lambda+1)\vdual{u}{v}_\Omega + \lambda\vdual{\nabla u}{\nabla v}_\Omega = \vdual{u_d}{v}_\Omega
  \quad\text{ for all } v\in \cH^1(\Omega).
\end{align}
The equivalence between~\eqref{eq:foc} and~\eqref{eq:foc:2} is given by $g = \trace u$.
In order to avoid the discretization of $S$ for the computation of $St$ in~\eqref{eq:foc}
(i.e., to fulfill the restriction $v\in\cH^1(\Omega)$ in~\eqref{eq:foc:2}), one usually introduces
the adjoint operator $S^\star$. Here, we propose to include the condition $\vdual{\nabla u}{\nabla v}_\Omega = 0$
with a Lagrangian multiplier, that is, find $(u,\varphi)\in H^1(\Omega)\times H^1_0(\Omega)$
such that
\begin{align}\label{eq:var}
  \begin{split}
  (\lambda+1)\vdual{u}{v}_\Omega + \lambda\vdual{\nabla u}{\nabla v}_\Omega 
  + \vdual{\nabla \varphi}{\nabla v}_\Omega &= \vdual{u_d}{v}\\
  \vdual{\nabla u}{\nabla \psi}_\Omega &= 0
  \end{split}
\end{align}
for all $(v,\psi)\in H^1(\Omega)\times H^1_0(\Omega)$.
The finite element method~\eqref{eq:var:disc} is a discretization
of~\eqref{eq:var} by standard finite element spaces.
We note that this problem is of saddle point type and a-priori analysis would have
to be done accordingly. For example, we would have to check
that discretizations allow for discrete inf-sup conditions.
Furthermore, the variational formulation~\eqref{eq:var:constrained} that we propose for the constrained problem
below consists of a variational inequality and equality.
In order to facilitate the a-priori analysis, we will carry it out using
the equivalent formulation~\eqref{eq:foc:2} in case of the unconstrained problem, and
the respective variational inequality for the constrained problem.
To that end, we will show that conforming discretizations of~\eqref{eq:var} are equivalent
to nonconforming discretizations of~\eqref{eq:foc:2}
and our a-priori analysis will be then carried out in the nonconforming setting only, using the Strang lemma.
For the constrained problem~\eqref{eq:problem:new:min}-\eqref{eq:problem:new:pde}-\eqref{eq:problem:new:box}
we propose the variational formulation
\begin{align}\label{eq:var:constrained}
  \begin{split}
  (\lambda+1)\vdual{u}{v-u}_\Omega + \lambda\vdual{\nabla u}{\nabla v-u}_\Omega 
  + \vdual{\nabla \varphi}{\nabla v-u}_\Omega &\geq \vdual{u_d}{v-u}\\
  \vdual{\nabla u}{\nabla \psi}_\Omega &= 0
  \end{split}
\end{align}
for all $(v,\psi)\in H^1_{\rm ad}(\Omega)\times H^1_0(\Omega)$.

Define the bilinear form
$b:H^1(\Omega)\times H^1(\Omega)\rightarrow\R$ by
\begin{align*}
  b(u,w) := \left( \lambda + 1 \right)\vdual{u}{v}_{\Omega} + \lambda\vdual{\nabla u}{\nabla v}_{\Omega}.
\end{align*}
Given $g\in L_2(\Omega)$, the problem to 
\begin{align}\label{eq:var:general}
  \text{find } u \in H^1(\Omega) \text{ such that }
  b(u,v) = \vdual{g}{v}_\Omega \quad\text{ for all } v\in H^1(\Omega)
\end{align}
is well-posed, and so is the problem to
\begin{align}\label{eq:var:constrained:general}
  \text{find } u \in H^1_{\rm ad}(\Omega) \text{ such that }
  b(u,v-u) \geq \vdual{g}{v-u}_\Omega \quad\text{ for all } v\in H^1_{\rm ad}(\Omega),
\end{align}
where $H^1_{\rm ad}(\Omega)$ denotes a closed convex subset of $H^1(\Omega)$.
For problem~\eqref{eq:var:general}, this follows immediately with the Lemma of Lax Milgram,
as $b$ is obviously elliptic and bounded on $H^1(\Omega)$,
and the linear functional $\ell(v):=\vdual{g}{v}_\Omega$ is bounded on $H^1(\Omega)$.
For problem~\eqref{eq:var:constrained:general}, this follows by the theory of variational inequalities,
cf.~\cite[Chapter~2.1]{GlowinskiLT_81}.
Note that $\cH^1(\Omega)$ is a closed subspace of $H^1(\Omega)$, and
$\cH^1_{\rm ad}(\Omega):=H^1_{\rm ad}(\Omega)\cap \cH^1(\Omega)$
is a closed and convex subset of $H^1(\Omega)$ and $\cH^1(\Omega)$.
Hence, we immediately obtain the following result.
\begin{lemma}\label{lem:new:wellposed}
  Suppose that $g\in L_2(\Omega)$. Then, the problems to
  \begin{align}\label{eq:problem:var}
    \text{find } \phi\in \cH^1(\Omega) \text{ such that } b(\phi,\psi)=\vdual{g}{\psi}_\Omega
    \quad\text{ for all } \psi\in \cH^1(\Omega)
  \end{align}
  and to
  \begin{align}\label{eq:problem:var:constrained}
    \text{find } \phi\in \cH^1_{\rm ad}(\Omega) \text{ such that } 
    b(\phi,\psi-\phi)\geq\vdual{g}{\psi-\phi}_\Omega \quad\text{ for all } \psi\in \cH^1_{\rm ad}(\Omega)
  \end{align}
  are well-posed: to each one there exists a unique solution $\phi\in \cH^1(\Omega)$, and it holds
  \begin{align}\label{lem:new:wellposed:eq1}
    \norm{\phi}{H^1(\Omega)} \lesssim \norm{g}{L_2(\Omega)}.
  \end{align}
  \qed
\end{lemma}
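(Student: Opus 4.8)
The plan is to obtain both assertions as immediate corollaries of the abstract well-posedness results already invoked for~\eqref{eq:var:general} and~\eqref{eq:var:constrained:general}, now with the Hilbert space $H^1(\Omega)$ replaced by the closed subspace $\cH^1(\Omega)$, and the closed convex set $H^1_{\rm ad}(\Omega)$ replaced by $\cH^1_{\rm ad}(\Omega)$. The first step is to record the two structural properties of $b$ that make this work. Boundedness is clear, $\abs{b(u,w)}\le(\lambda+1)\norm{u}{H^1(\Omega)}\norm{w}{H^1(\Omega)}$, and, because $\lambda>0$, coercivity holds on all of $H^1(\Omega)$:
\begin{align*}
  b(v,v)=(\lambda+1)\norm{v}{L_2(\Omega)}^2+\lambda\norm{\nabla v}{L_2(\Omega)}^2\ge\lambda\norm{v}{H^1(\Omega)}^2 .
\end{align*}
Both properties descend to any subset of $H^1(\Omega)$, in particular to $\cH^1(\Omega)$ and to $\cH^1_{\rm ad}(\Omega)$. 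Moreover the right-hand side $\psi\mapsto\vdual{g}{\psi}_\Omega$ is a bounded linear functional with $\abs{\vdual{g}{\psi}_\Omega}\le\norm{g}{L_2(\Omega)}\norm{\psi}{H^1(\Omega)}$.

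Next I would address the geometry of the underlying sets. As already remarked in the text, $\cH^1(\Omega)$ is a closed subspace of $H^1(\Omega)$, being cut out by the continuous linear conditions in~\eqref{eq:harmfun}. Consequently $\cH^1_{\rm ad}(\Omega)=H^1_{\rm ad}(\Omega)\cap\cH^1(\Omega)$ is closed and convex, as the intersection of a closed convex set with a closed subspace, and it is nonempty since every admissible constant is harmonic and hence belongs to it. With these facts in hand, existence and uniqueness for~\eqref{eq:problem:var} follow from the Lemma of Lax--Milgram applied on the Hilbert space $\cH^1(\Omega)$, and for~\eqref{eq:problem:var:constrained} from the Lions--Stampacchia theorem for elliptic variational inequalities on the closed convex set $\cH^1_{\rm ad}(\Omega)$, cf.~\cite[Chapter~2.1]{GlowinskiLT_81}.

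It remains to establish the stability bound~\eqref{lem:new:wellposed:eq1}. In the unconstrained case one simply tests~\eqref{eq:problem:var} with $\psi=\phi$ and combines coercivity with the Cauchy--Schwarz inequality,
\begin{align*}
  \lambda\norm{\phi}{H^1(\Omega)}^2\le b(\phi,\phi)=\vdual{g}{\phi}_\Omega\le\norm{g}{L_2(\Omega)}\norm{\phi}{H^1(\Omega)},
\end{align*}
giving $\norm{\phi}{H^1(\Omega)}\le\lambda^{-1}\norm{g}{L_2(\Omega)}$. In the constrained case I would fix one reference element $\psi_0\in\cH^1_{\rm ad}(\Omega)$, insert $\psi=\psi_0$ into~\eqref{eq:problem:var:constrained}, rearrange to $b(\phi,\phi)\le b(\phi,\psi_0)+\vdual{g}{\phi}_\Omega-\vdual{g}{\psi_0}_\Omega$, and then use coercivity together with Young's inequality to absorb the terms carrying $\norm{\phi}{H^1(\Omega)}$ into the left-hand side; this yields $\norm{\phi}{H^1(\Omega)}\lesssim\norm{g}{L_2(\Omega)}+\norm{\psi_0}{H^1(\Omega)}$, where the last term is a fixed constant depending only on $a$, $b$, and $\Omega$ (and disappears whenever $0$ is admissible). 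The main---and really only---point requiring attention is this last estimate: in contrast to the equation, the variational inequality cannot be tested with $\phi$ itself, so the dependence on the data is a priori affine rather than linear unless the admissible set contains the origin; everything else is a routine transcription of the standard well-posedness theory to the subspace/convex-subset setting.
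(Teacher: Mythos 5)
Your proof is correct and follows essentially the same route as the paper: well-posedness is obtained by restricting the Lax--Milgram/Stampacchia theory for the bounded, $H^1(\Omega)$-elliptic form $b$ to the closed subspace $\cH^1(\Omega)$ and the closed convex set $\cH^1_{\rm ad}(\Omega)$, which is exactly the argument the paper gives in the discussion preceding the lemma (the lemma itself carries no separate proof). The one point where you go beyond the paper is the constrained stability bound, and your caveat there is mathematically correct: testing with a fixed $\psi_0\in\cH^1_{\rm ad}(\Omega)$ only yields $\norm{\phi}{H^1(\Omega)}\lesssim\norm{g}{L_2(\Omega)}+\norm{\psi_0}{H^1(\Omega)}$, so~\eqref{lem:new:wellposed:eq1} for problem~\eqref{eq:problem:var:constrained} holds with a constant independent of the admissible set only when $0\in\cH^1_{\rm ad}(\Omega)$; indeed, for $g=0$ and $a>0$ the solution is the nonzero $b$-projection of $0$ onto $\cH^1_{\rm ad}(\Omega)$, so the literal estimate fails. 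This is an imprecision in the paper's statement rather than a gap in your argument; the only minor assumption you should make explicit is that $\cH^1_{\rm ad}(\Omega)\neq\emptyset$, e.g.\ because the bounds $a\le b$ admit a constant between them, which is the natural reading of the paper's constraint.
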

As already stated at the beginning of this chapter,
for $g:=u_d$ the problem~\eqref{eq:problem:new} is equivalent to
problem~\eqref{eq:problem:var}.
Likewise, by well-known arguments carried out in~\cite[Chapter~1]{GlowinskiLT_81},
problem~\eqref{eq:problem:var:constrained} is equivalent to
the constrained problem~\eqref{eq:problem:new:min}-\eqref{eq:problem:new:pde}-\eqref{eq:problem:new:box}.
Lemma~\ref{lem:new:wellposed} is stated
as to facilitate some duality arguments below, where it
is necessary to solve problem~\eqref{eq:problem:var} for general $g\in L_2(\Omega)$.
\begin{cor}\label{cor:cont:equiv}
  Problem~\eqref{eq:var}, respectively problem~\eqref{eq:var:constrained}, is well-posed, i.e.,
  there exists a unique solution $(u,\varphi)\in H^1(\Omega)\times H^1_0(\Omega)$,
  respectively $(u,\varphi)\in H^1_{\rm ad}(\Omega)\times H^1_0(\Omega)$, and
  \begin{align*}
    \norm{u}{H^1(\Omega)} + \norm{\varphi}{H^1(\Omega)}
    \lesssim \norm{u_d}{L_2(\Omega)}.
  \end{align*}
  Furthermore, problems~\eqref{eq:var} and~\eqref{eq:problem:var} with $g:=u_d$ are equivalent,
  and so are problems~\eqref{eq:var:constrained} and~\eqref{eq:problem:var:constrained}.
\end{cor}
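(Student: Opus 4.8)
The plan is to let the second equation of the mixed system force the first component to be weakly harmonic, and then to collapse~\eqref{eq:var} onto the reduced problem~\eqref{eq:problem:var} (and~\eqref{eq:var:constrained} onto~\eqref{eq:problem:var:constrained}) by testing against harmonic functions, so that existence, uniqueness and the bound on $u$ are handed to us by Lemma~\ref{lem:new:wellposed}; the multiplier $\varphi$ is then built by a separate coercive problem on $H^1_0(\Omega)$. \textbf{Step 1 (reduction).} Suppose $(u,\varphi)$ solves~\eqref{eq:var} (resp.~\eqref{eq:var:constrained}). The second line together with~\eqref{eq:harmfun} gives $u\in\cH^1(\Omega)$, hence $u\in\cH^1_{\rm ad}(\Omega)$ in the constrained case. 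For any $\psi\in\cH^1(\Omega)$ (resp.\ $v\in\cH^1_{\rm ad}(\Omega)$) the multiplier contribution $\vdual{\nabla\varphi}{\nabla\psi}_\Omega$ (resp.\ $\vdual{\nabla\varphi}{\nabla(v-u)}_\Omega$) vanishes, since $\varphi\in H^1_0(\Omega)$ while $\psi$ (resp.\ $v-u$) is harmonic. Thus the first line reduces to $b(u,\psi)=\vdual{u_d}{\psi}_\Omega$ for all $\psi\in\cH^1(\Omega)$ (resp.\ $b(u,v-u)\ge\vdual{u_d}{v-u}_\Omega$ for all $v\in\cH^1_{\rm ad}(\Omega)$): $u$ solves~\eqref{eq:problem:var} (resp.~\eqref{eq:problem:var:constrained}) with $g:=u_d$. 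By Lemma~\ref{lem:new:wellposed}, $u$ is thereby uniquely determined and $\norm{u}{H^1(\Omega)}\lesssim\norm{u_d}{L_2(\Omega)}$.

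\textbf{Step 2 (existence and the multiplier).} Conversely, let $u$ denote the solution of~\eqref{eq:problem:var} (resp.~\eqref{eq:problem:var:constrained}) supplied by Lemma~\ref{lem:new:wellposed}. I would define $\varphi\in H^1_0(\Omega)$ by Lax--Milgram as the unique solution of $\vdual{\nabla\varphi}{\nabla w}_\Omega=\vdual{u_d}{w}_\Omega-b(u,w)$ for all $w\in H^1_0(\Omega)$: coercivity of $(\varphi,w)\mapsto\vdual{\nabla\varphi}{\nabla w}_\Omega$ on $H^1_0(\Omega)$ is the Friedrichs inequality, the right-hand side is bounded on $H^1(\Omega)$, and together with Step~1 this gives $\norm{\varphi}{H^1(\Omega)}\lesssim\norm{u_d}{L_2(\Omega)}$. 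To check that $(u,\varphi)$ solves the mixed problem: the second line holds since $u$ is harmonic; for the first line I would use the direct sum $H^1(\Omega)=H^1_0(\Omega)\oplus\cH^1(\Omega)$ realised by $v=(v-\Ee\gamma v)+\Ee\gamma v$, where $v-\Ee\gamma v\in H^1_0(\Omega)$ and $\Ee\gamma v\in\cH^1(\Omega)$ by~\eqref{eq:harmext} and~\eqref{eq:harmfun}, split $v-u$ along this decomposition, apply the defining relation for $\varphi$ to the $H^1_0(\Omega)$-part and the reduced (in)equation for $u$ to the $\cH^1(\Omega)$-part, and recombine, using once more that $\varphi$ is $\vdual{\nabla\cdot}{\nabla\cdot}_\Omega$-orthogonal to $\cH^1(\Omega)$.

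\textbf{Step 3 (uniqueness of $\varphi$ and equivalence).} For~\eqref{eq:var}, uniqueness of $\varphi$ follows at once by subtracting two solutions and testing the first line with $v\in H^1_0(\Omega)$. For~\eqref{eq:var:constrained}, the point is that $u\pm w\in H^1_{\rm ad}(\Omega)$ for every $w\in H^1_0(\Omega)$ (the trace is unchanged), so inserting both sign choices into the variational inequality turns it into the equality $b(u,w)+\vdual{\nabla\varphi}{\nabla w}_\Omega=\vdual{u_d}{w}_\Omega$ on $H^1_0(\Omega)$, whose solution is unique. The claimed equivalence of~\eqref{eq:var} with~\eqref{eq:problem:var}, resp.\ of~\eqref{eq:var:constrained} with~\eqref{eq:problem:var:constrained}, is then exactly the identity ``first component of the mixed solution $=$ solution of the reduced problem'', established in both directions by Steps~1 and~2, and the stability estimate is the sum of the two bounds obtained above.

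The one step that needs genuine care is the constrained case of Step~2: one must use that the harmonic extension $\Ee\gamma v$ of an admissible boundary datum is again admissible (immediate from $\gamma\Ee\gamma v=\gamma v$), so that the reduced variational inequality~\eqref{eq:problem:var:constrained} may legitimately be tested with $\Ee\gamma v$, and one must verify that the splitting $v-u=(v-\Ee\gamma v)+(\Ee\gamma v-u)$ really lands its two summands in $H^1_0(\Omega)$ and $\cH^1(\Omega)$ respectively, on which the defining relation for $\varphi$ and the inequality for $u$ apply separately. Everything else is Lax--Milgram, the standard theory of variational inequalities (e.g.\ \cite[Chapter~2.1]{GlowinskiLT_81}), and routine bookkeeping.
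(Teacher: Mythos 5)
Your proposal is correct and follows essentially the same route as the paper's proof: reduce the mixed system to the problem on $\cH^1(\Omega)$ (the multiplier term vanishes against weakly harmonic test functions since $\varphi\in H^1_0(\Omega)$), invoke Lemma~\ref{lem:new:wellposed} for the first component, and construct $\varphi$ via a coercive Poisson problem on $H^1_0(\Omega)$, verifying the mixed formulation through the splitting $v=(v-\Ee\gamma v)+\Ee\gamma v$. The only deviation is the uniqueness of $\varphi$ in the constrained case, where you test with $v=u\pm w$ for arbitrary $w\in H^1_0(\Omega)$ to upgrade the variational inequality to an equality on all of $H^1_0(\Omega)$ --- a slightly more direct argument than the paper's quadratic computation with the test functions $u-\varphi_j$ and $u+\varphi_j$, but resting on the same observation that $H^1_0$-perturbations preserve admissibility.
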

\begin{proof}
  We start with problem~\eqref{eq:var}.
  Set $g:=u_d$ in~\eqref{eq:problem:var} and let $u\in \cH^1(\Omega)$ be the unique solution.
  Now, define $\varphi\in H^1_0(\Omega)$ as the unique solution to the problem
  \begin{align*}
    \vdual{\nabla \varphi}{\nabla\phi}_\Omega
    = \vdual{u_d}{\phi}_\Omega - (\lambda+1)\vdual{u}{\phi}_\Omega - \lambda \vdual{\nabla u}{\nabla \phi}
    \text{ for all }\phi\in H^1_0(\Omega).
  \end{align*}
  Then, it is easily seen that the pair $(u,\varphi)$ solves~\eqref{eq:var}, and fulfills the stated
  stability estimate. It suffices to show that this solution is unique. To that end,
  let $(u,\varphi)$ be a solution of~\eqref{eq:var} with vanishing right-hand side.
  It follows that $u\in\cH^1(\Omega)$ solves~\eqref{eq:problem:var} with vanishing right-hand side,
  and hence $u=0$. The resulting identity from~\eqref{eq:var} is $\vdual{\nabla \varphi}{\nabla v}_{\Omega}=0$
  for all $v\in H^1(\Omega)$, which implies $\varphi=0$.\\

  To consider problem~\eqref{eq:var:constrained} we can argue as above.
  Let $u\in\cH^1_{\rm ad}(\Omega)$ be the unique solution of
  problem~\eqref{eq:problem:var:constrained} with $g:=u_d$. Defining $\varphi\in H^1_0(\Omega)$ as above, we
  see that $(u,\varphi)$ solves~\eqref{eq:var:constrained} and fulfills the desired stability estimate.
  To show uniqueness, suppose that
  $(u_1,\varphi_1)$ and $(u_2,\varphi_2)$ are solutions of~\eqref{eq:var:constrained}. It follows that
  both $u_1$ and $u_2$ solve~\eqref{eq:problem:var:constrained}, with $g=u_d$, hence $u_1=u_2=:u$.
  It follows that for $j=1,2$
  \begin{align*}
    \vdual{\nabla\varphi_j}{\nabla \left( v-u \right)}_\Omega
    \geq \vdual{u_d}{v-u}_\Omega - \left( \lambda+1 \right) \vdual{u}{v-u}_\Omega
    - \lambda\vdual{\nabla u}{\nabla(v-u)}_\Omega
  \end{align*}
  for all $v\in H^1_{\rm ad}(\Omega)$. Let us abbreviate the right-hand side above by $\ell$, i.e.,
  \begin{align*}
    \ell(\phi) := \vdual{u_d}{\phi}_\Omega - \left( \lambda+1 \right) \vdual{u}{\phi}_\Omega
    - \lambda\vdual{\nabla u}{\nabla\phi}_\Omega
  \end{align*}
  From $u-\varphi_j\in H^1_{\rm ad}(\Omega)$ we conclude that
  $\vdual{\nabla\varphi_j}{\nabla\varphi_j}_\Omega \leq \ell(\varphi_j)$, and from
  $u+\varphi_j\in H^1_{\rm ad}(\Omega)$ we conclude that
  $2\vdual{\nabla\varphi_1}{\nabla \varphi_2}_\Omega\geq \ell(\varphi_1) + \ell(\varphi_2)$.
  Finally,
  \begin{align*}
    \vdual{\nabla(\varphi_1-\varphi_2)}{\nabla(\varphi_1-\varphi_2)}_\Omega
    = \vdual{\nabla\varphi_1}{\nabla\varphi_1}_\Omega
    + \vdual{\nabla\varphi_1}{\nabla\varphi_1}_\Omega
    - 2\vdual{\nabla\varphi_1}{\nabla\varphi_2}_\Omega
    \leq 0,
  \end{align*}
  and we conclude that $\varphi_1=\varphi_2$.
\end{proof}
\subsection{Regularity of the solution}
Problem~\eqref{eq:var:general} is actually a Neumann problem for $-\Delta u + u=g$ with
zero Neumann data. It is known that on smooth domains or convex polygons in $\R^2$,
the solution has improved regularity $u\in H^2(\Omega)$.
Although the space of test functions for~\eqref{eq:problem:var} is a proper subspace of $H^1(\Omega)$,
we can still obtain the same regularity result.
\begin{lemma}
  \label{lem:new:regularity}
  Let $\Omega$ be either smooth, i.e., having a $C^{1,1}$ boundary, or a convex polygon in $\R^2$.
  Suppose that $g\in L_2(\Omega)$ and
  let $u\in \cH^1(\Omega)$ be the unique solution of~\eqref{eq:problem:var}.
  Then $u\in H^{2}(\Omega)$ and
  \begin{align*}
    \norm{u}{H^2(\Omega)} \lesssim \norm{g}{L_2(\Omega)}.
  \end{align*}
\end{lemma}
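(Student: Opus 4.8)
The plan is to reduce the statement to two standard elliptic regularity results on $\Omega$ — one for the Dirichlet problem and one for the pure Neumann problem — applied to auxiliary functions assembled from $u$ and an associated Lagrange multiplier. First I would reintroduce a multiplier: let $u\in\cH^1(\Omega)$ be the solution of~\eqref{eq:problem:var}, so that $\norm{u}{H^1(\Omega)}\lesssim\norm{g}{L_2(\Omega)}$ by Lemma~\ref{lem:new:wellposed}. Repeating the construction in the proof of Corollary~\ref{cor:cont:equiv}, but with $u_d$ replaced by the general right-hand side $g\in L_2(\Omega)$, produces $\varphi\in H^1_0(\Omega)$ with $\norm{\varphi}{H^1(\Omega)}\lesssim\norm{g}{L_2(\Omega)}$ such that the pair $(u,\varphi)$ satisfies the analogue of~\eqref{eq:var} with right-hand side $\vdual{g}{\cdot}_\Omega$. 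Since $u\in\cH^1(\Omega)$ we have $\vdual{\nabla u}{\nabla\psi}_\Omega=0$ for all $\psi\in H^1_0(\Omega)$, cf.~\eqref{eq:harmfun}; restricting the first equation of this analogue to test functions $\psi\in H^1_0(\Omega)$ therefore kills the term $\lambda\vdual{\nabla u}{\nabla\psi}_\Omega$ and shows that $\varphi$ is the weak solution of the Dirichlet problem $-\Delta\varphi=g-(\lambda+1)u$ in $\Omega$, $\varphi=0$ on $\Gamma$. As $g-(\lambda+1)u\in L_2(\Omega)$ and $\Omega$ is smooth or a convex polygon, $H^2$-regularity for the Dirichlet problem gives $\varphi\in H^2(\Omega)$ with $\norm{\varphi}{H^2(\Omega)}\lesssim\norm{g-(\lambda+1)u}{L_2(\Omega)}\lesssim\norm{g}{L_2(\Omega)}$.

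Next I would pass to the combination $w:=\lambda u+\varphi$. Keeping $v\in H^1(\Omega)$ arbitrary and rearranging the first equation of the above analogue of~\eqref{eq:var}, the $\lambda\vdual{\nabla u}{\nabla v}_\Omega$ contributions cancel and one is left with
\begin{align*}
  \vdual{\nabla w}{\nabla v}_\Omega=\vdual{g-(\lambda+1)u}{v}_\Omega\qquad\text{for all }v\in H^1(\Omega),
\end{align*}
which is exactly the weak form of the pure Neumann problem $-\Delta w=g-(\lambda+1)u$ in $\Omega$, $\partial_\nn w=0$ on $\Gamma$. Its solvability condition $\vdual{g-(\lambda+1)u}{1}_\Omega=0$ holds because testing the displayed identity with $v=1$ gives $(\lambda+1)\vdual{u}{1}_\Omega=\vdual{g}{1}_\Omega$. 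Hence $H^2$-regularity for the Neumann problem on smooth domains and convex polygons yields $w\in H^2(\Omega)$ with $\norm{w}{H^2(\Omega)}\lesssim\norm{g-(\lambda+1)u}{L_2(\Omega)}+\norm{w}{L_2(\Omega)}\lesssim\norm{g}{L_2(\Omega)}$, using $\norm{w}{L_2(\Omega)}\le\lambda\norm{u}{L_2(\Omega)}+\norm{\varphi}{L_2(\Omega)}\lesssim\norm{g}{L_2(\Omega)}$ for the last estimate.

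Finally, $u=\lambda^{-1}(w-\varphi)$, so $u\in H^2(\Omega)$ with $\norm{u}{H^2(\Omega)}\lesssim\lambda^{-1}\bigl(\norm{w}{H^2(\Omega)}+\norm{\varphi}{H^2(\Omega)}\bigr)\lesssim\norm{g}{L_2(\Omega)}$, as claimed. The only non-elementary inputs are the $H^2$-regularity statements for the Dirichlet and Neumann problems on convex polygons and $C^{1,1}$ domains, which are classical. I expect the main obstacle to be choosing the right auxiliary quantity: working with $u$ alone leads to a Neumann problem for $u$ whose boundary datum $-\lambda^{-1}\partial_\nn\varphi$ is, on a polygon, only piecewise $H^{1/2}$ along $\Gamma$, so that the expected $H^2$-bound is not immediate; passing instead to $w=\lambda u+\varphi$, which carries vanishing Neumann data, removes any corner-compatibility issue.
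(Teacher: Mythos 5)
Your argument is correct, but it takes a genuinely different route than the paper's proof. The paper works with $u$ alone: it extends the variational identity of~\eqref{eq:problem:var} from $\cH^1(\Omega)$ to all of $H^1(\Omega)$ by splitting a test function $v$ into its harmonic extension $\Ee\trace v$ plus an $H^1_0(\Omega)$ remainder, bounds the resulting boundary contribution through the duality estimate $\norm{\Ee v}{L_2(\Omega)}\lesssim\norm{v}{H^{-1/2}(\Gamma)}$ from~\eqref{eq:harm}, and thereby recognizes $u$ itself as the solution of a Neumann problem with $L_2(\Omega)$ volume data and an \emph{inhomogeneous} Neumann datum $h\in H^{1/2}(\Gamma)$, to which it applies the regularity theorem cited from Girault--Raviart. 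You instead reinstate the Lagrange multiplier as in Corollary~\ref{cor:cont:equiv} (with $u_d$ replaced by $g$), read off from the mixed system tested with $H^1_0(\Omega)$ functions that $\varphi$ solves a homogeneous Dirichlet problem with right-hand side $g-(\lambda+1)u\in L_2(\Omega)$, and that $w=\lambda u+\varphi$ solves a pure Neumann problem with the same right-hand side and homogeneous Neumann data; classical $H^2$ regularity for these two problems on convex polygons and $C^{1,1}$ domains then yields $u=\lambda^{-1}(w-\varphi)\in H^2(\Omega)$ with the stated bound. Your route avoids both the harmonic-extension duality estimate and the identification of the Neumann datum $h$, at the price of invoking two classical regularity results (Dirichlet, and homogeneous Neumann with $L_2$ data, e.g.\ from Grisvard) instead of the single Neumann result with $H^{1/2}(\Gamma)$ data used in the paper; you also handle correctly the two points where such an argument could slip, namely the compatibility condition for the pure Neumann problem (which follows by testing with $v=1$) and the control of $\norm{w}{L_2(\Omega)}$ needed to fix the constant mode in the Neumann a priori bound.
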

\begin{proof}
  Let $u\in\cH^1(\Omega)$ be the solution of~\eqref{eq:problem:var}. For $v\in H^1(\Omega)$,
  write $v = \Ee v + (v - \Ee v)$ and note that $v-\Ee v\in H^1_0(\Omega)$ to conclude
  \begin{align}\label{lem:reg:eq1}
    (\lambda+1)\vdual{u}{v}_\Omega + \lambda\vdual{\nabla u}{\nabla v}_\Omega
    = \vdual{g}{\Ee v}_\Omega + (\lambda+1)\vdual{u}{v-\Ee v}_\Omega
    \quad\text{ for all } v \in H^1(\Omega).
  \end{align}
  Now, according to~\cite[Lemma~2.1]{ApelNP_16}, cf. also~\cite[Lemma~2.1]{MayRV_13},
  it holds that
  \begin{align}\label{eq:harm}
    \norm{\Ee v}{L_2(\Omega)} \lesssim \norm{v}{H^{-1/2}(\Gamma)},
  \end{align}
  and with~\eqref{lem:new:wellposed:eq1}
  we conclude that the right-hand side of~\eqref{lem:reg:eq1} is bounded by
  \begin{align*}
    \norm{g}{L_2(\Omega)} \left( \norm{v}{L_2(\Omega)} + \norm{v}{H^{-1/2}(\Gamma)}
    \right)
  \end{align*}
  Hence, the variational formulation~\eqref{lem:reg:eq1}
  for $u$ is actually a Neumann problem of the form
  \begin{align*}
    \lambda\vdual{\nabla u}{\nabla v}_\Omega
    = 
    (\lambda+1)\vdual{u}{v}_\Omega +
    \vdual{F}{v}_\Omega + \dual{h}{v}_\Gamma
    \quad\text{ for all } v \in H^1(\Omega)
  \end{align*}
  for some $F\in L_2(\Omega)$ and $h\in H^{1/2}(\Gamma)$ with norms bounded
  by $\norm{g}{L_2(\Omega)}$. According to~\cite[Theorem~1.10]{GiraultR_86} 
  there holds the regularity $u\in H^2(\Omega)$.
\end{proof}
\section{Analysis of the discrete problem}\label{sec:disc}
We assume for simplicity that $\Omega\subset\R^d$ is a polyhedral domain with boundary $\Gamma$. Let
$\Tt_h$ be a regular, $\kappa$-shape regular mesh on $\Omega$, i.e., $\Tt_h$ is a finite set of open,
$d$-dimensional simplices $K\in\Tt_h$ such that $\overline\Omega = \cup_{K\in\Tt_h}\overline K$,
there are no hanging nodes, and $h_K^d \simeq \abs{K}$, where $h_K := \diam(K)$ denotes
the element diameter and $\abs{K}$ denotes the volume of $K$. We denote the global
mesh-width by $h = \max_{K\in\Tt_h}h_K$.
We define the space of globally continuous,
piecewise polynomials of degree $p\in\N$ as
\begin{align*}
  \Ss^1(\Tt_h) :=
  \left\{ u_h \in C(\Omega) \mid u_h|_K \textit{ is a polynomial of degree at most } p \right\}.
\end{align*}
By $\Ss^1(\Tt_h)|_\Gamma$ we denote the induced piecewise polynomial space on the boundary $\Gamma$,
and $\Ss^1_0(\Tt_h):=\Ss^1(\Tt_h)\cap H^1_0(\Omega)$.
We will use the Scott-Zhang projection operator $J_h:H^1(\Omega)\rightarrow \Ss^1(\Tt_h)$
from~\cite{ScottZ_90}. It has the well-known properties
\begin{subequations}
\begin{align}
  p_h - J_h p_h &= 0 \quad\text{ for all }p_h\in\Ss^1(\Tt_h),\label{sz:eq1}\\
  v - J_h v &\in H^1_0(\Omega)  \quad\text{ if } v|_\Gamma = \Ss^1(\Tt_h)|_\Gamma,\label{sz:eq2}\\
  \norm{J_h v }{H^1(\Omega)} &\lesssim \norm{v}{H^1(\Omega)} \quad\text{ for all } v\in H^1(\Omega),
  \label{sz:eq3}\\
  \norm{v - J_h v}{H^1(\Omega)} &\lesssim h \norm{v}{H^2(\Omega)}
  \quad\text{ for all } v\in H^1_0(\Omega)\cap H^2(\Omega).\label{sz:eq4}
\end{align}
\end{subequations}
By $\Ee_h:\Ss^1(\Tt_h)|_\Gamma \rightarrow \Ss^1(\Tt_h)$ we denote the discrete harmonic extension
operator, i.e., the discretization of~\eqref{eq:harmext}, that is
\begin{align}\label{eq:discharmext}
  \begin{split}
    \vdual{\nabla\Ee_h q_h}{\nabla v_h}_\Omega &= 0 \quad\text{ for all } v_h\in \Ss^1_0(\Tt_h),\\
    \gamma\Ee_h q_h &= q_h.
  \end{split}
\end{align}
Note that
\begin{align}
  \trace \Ee_h \trace p_h = \trace p_h \quad\text{ for all } p_h\in\Ss^1(\Tt_h).
  \label{dext:eq}
\end{align}
Finally, we define the finite-dimensional space of discrete harmonic functions
\begin{align*}
  \widetilde\Ss^1(\Tt_h) := \Ee_h(\Ss^1(\Tt_h)|_\Gamma).
\end{align*}
Again, the spaces $\Ss^1_{\rm ad}(\Tt_h)$ and $\wilde\Ss^1_{\rm ad}(\Tt_h)$ denote convex subsets
of functions which fulfill the constraint~\eqref{eq:problem:new:box}.
\subsection{Nonconforming a priori analysis}
The following lemma follows straighforwardly as in the continuous case, cf. Corollary~\ref{cor:cont:equiv}.
\begin{lemma}\label{lem:nconf}
  The problem~\eqref{eq:var:disc} has a unique solution $(u_h,\varphi_h)\in\Ss^1(\Tt_h)\times \Ss^1_0(\Tt_h)$,
  and the problem~\eqref{eq:var:constrained:disc} has a unique solution
  $(u_h,\varphi_h)\in\Ss^1_{\rm ad}(\Tt_h)\times\Ss^1_0(\Tt_h)$.
  Furthermore, if $(u_h,\varphi_h)\in\Ss^1(\Tt_h)\times\Ss^1_0(\Tt_h)$ solves~\eqref{eq:var:disc}, then $u_h$
  solves the problem to
  \begin{align}\label{eq:problem:var:disc:u}
    \text{find } \wilde u_h \in \widetilde\Ss^1(\Tt_h) \text{ such that }
    b(\wilde u_h,\psi_h)= \vdual{u_d}{\psi_h}_\Omega
    \text{ for all } \psi_h\in \widetilde\Ss^1(\Tt_h).
  \end{align}
  Likewise, if $(u_h,\varphi_h)\in\Ss^1_{\rm ad}(\Tt_h)\times\Ss^1_0(\Tt_h)$ solves~\eqref{eq:var:constrained:disc}, then $u_h$
  solves the problem to
  \begin{align}\label{eq:problem:var:constrained:disc:u}
    \text{find } \wilde u_h \in \widetilde\Ss^1_{\rm ad}(\Tt_h) \text{ such that }
    b(\wilde u_h,\psi_h-\wilde u_h)\geq \vdual{u_d}{\psi_h-\wilde u_h}_\Omega
    \text{ for all } \psi_h\in \widetilde\Ss^1_{\rm ad}(\Tt_h).
  \end{align}
  Furthermore, solutions of~\eqref{eq:problem:var:disc:u} and~\eqref{eq:problem:var:constrained:disc:u}
  are unique.
\end{lemma}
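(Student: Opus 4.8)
The plan is to treat the four assertions in order: well-posedness of the two mixed problems, the reduction of the mixed solution to the reduced problem posed on $\widetilde\Ss^1(\Tt_h)$, and uniqueness for the reduced problems. The first and last are essentially finite-dimensional copies of the continuous arguments, so I would mostly point to Corollary~\ref{cor:cont:equiv} and Lemma~\ref{lem:new:wellposed}, replacing $H^1(\Omega)$ by $\Ss^1(\Tt_h)$, $H^1_0(\Omega)$ by $\Ss^1_0(\Tt_h)$, $\cH^1(\Omega)$ by $\widetilde\Ss^1(\Tt_h)$, and the convex sets by their discrete analogues. Concretely, for well-posedness of \eqref{eq:var:disc}: given the solution $\wilde u_h\in\widetilde\Ss^1(\Tt_h)$ of the reduced problem \eqref{eq:problem:var:disc:u} (which exists and is unique by the discrete Lax--Milgram / variational-inequality argument, since $b$ is bounded and elliptic and $\widetilde\Ss^1(\Tt_h)$ is a finite-dimensional subspace), define $\varphi_h\in\Ss^1_0(\Tt_h)$ as the unique Galerkin solution of $\vdual{\nabla\varphi_h}{\nabla\phi_h}_\Omega = \vdual{u_d}{\phi_h}_\Omega - (\lambda+1)\vdual{u_h}{\phi_h}_\Omega - \lambda\vdual{\nabla u_h}{\nabla\phi_h}_\Omega$ for all $\phi_h\in\Ss^1_0(\Tt_h)$; then $(u_h,\varphi_h)$ solves \eqref{eq:var:disc}. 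For uniqueness, a solution of \eqref{eq:var:disc} with vanishing data has $u_h\in\widetilde\Ss^1(\Tt_h)$ (from the second equation) solving the reduced homogeneous problem, hence $u_h=0$, and then $\vdual{\nabla\varphi_h}{\nabla v_h}_\Omega=0$ for all $v_h\in\Ss^1(\Tt_h)\supset\Ss^1_0(\Tt_h)$ forces $\varphi_h=0$. The constrained case \eqref{eq:var:constrained:disc} is handled by the same two-step recipe, using the discrete variational inequality on $\widetilde\Ss^1_{\rm ad}(\Tt_h)$ and the parallelogram-type argument from the proof of Corollary~\ref{cor:cont:equiv} to get uniqueness of $\varphi_h$.

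The point requiring a little care—and the main obstacle—is the equivalence between the mixed formulation and the reduced formulation, specifically why the first equation of \eqref{eq:var:disc} restricted to test functions $\psi_h\in\widetilde\Ss^1(\Tt_h)$ can be ``tested against harmonic functions'' even though $\varphi_h$ is present. The key observation is that for $\psi_h\in\widetilde\Ss^1(\Tt_h)=\Ee_h(\Ss^1(\Tt_h)|_\Gamma)$ one has $\vdual{\nabla\varphi_h}{\nabla\psi_h}_\Omega=0$: indeed $\psi_h$ is discrete harmonic, i.e. $\vdual{\nabla\psi_h}{\nabla w_h}_\Omega=0$ for all $w_h\in\Ss^1_0(\Tt_h)$, and $\varphi_h\in\Ss^1_0(\Tt_h)$. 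Hence plugging $v_h=\psi_h\in\widetilde\Ss^1(\Tt_h)\subset\Ss^1(\Tt_h)$ into the first equation of \eqref{eq:var:disc} kills the multiplier term and leaves exactly $b(u_h,\psi_h)=\vdual{u_d}{\psi_h}_\Omega$. It remains to note that the second equation of \eqref{eq:var:disc}, $\vdual{\nabla u_h}{\nabla\psi_h}_\Omega=0$ for all $\psi_h\in\Ss^1_0(\Tt_h)$, says precisely that $u_h$ is discrete harmonic, i.e. $u_h\in\widetilde\Ss^1(\Tt_h)$ (using that $u_h=\Ee_h\trace u_h$, which follows from \eqref{dext:eq} and the defining property \eqref{eq:discharmext}); so $u_h$ solves \eqref{eq:problem:var:disc:u}.

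For the constrained version the same computation applies, but one must check that the admissible perturbations are compatible: for $u_h\in\widetilde\Ss^1_{\rm ad}(\Tt_h)$ and test functions $v_h\in\widetilde\Ss^1_{\rm ad}(\Tt_h)$ one takes $v_h-u_h$ as the increment, and since $v_h-u_h$ is a difference of discrete harmonic functions it is again orthogonal to $\nabla\varphi_h$; thus \eqref{eq:var:constrained:disc} reduces to $b(u_h,v_h-u_h)\ge\vdual{u_d}{v_h-u_h}_\Omega$ for all $v_h\in\widetilde\Ss^1_{\rm ad}(\Tt_h)$, which is \eqref{eq:problem:var:constrained:disc:u}. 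Uniqueness of the reduced solutions \eqref{eq:problem:var:disc:u} and \eqref{eq:problem:var:constrained:disc:u} is immediate from ellipticity of $b$ on the finite-dimensional (convex) set exactly as in Lemma~\ref{lem:new:wellposed}. I do not expect any genuine difficulty here; the proof is the discrete transcription of the continuous one, with the single substantive ingredient being the Galerkin orthogonality $\vdual{\nabla\varphi_h}{\nabla\psi_h}_\Omega=0$ for $\psi_h$ discrete harmonic.
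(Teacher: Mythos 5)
Your proposal is correct and is essentially the proof the paper intends: the paper dispatches Lemma~\ref{lem:nconf} by saying it follows as in the continuous case (Corollary~\ref{cor:cont:equiv}), and your discrete transcription—existence via the reduced problem plus a discrete Poisson problem for $\varphi_h$, uniqueness as in the continuous argument, and the reduction resting on the orthogonality $\vdual{\nabla\varphi_h}{\nabla\psi_h}_\Omega=0$ for discrete harmonic $\psi_h$ together with the second equation forcing $u_h\in\widetilde\Ss^1(\Tt_h)$—is exactly that argument.
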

For later reference, suppose that 
$\phi\in \cH^1(\Omega)$ is the solution of problem~\eqref{eq:problem:var}.
Then, we define a finite element approximation $\phi_h\in\wilde\Ss^1(\Tt_h)$ as
\begin{align}\label{eq:problem:var:disc}
  \text{find } \phi_h \in \widetilde\Ss^1(\Tt_h) \text{ such that } b(\phi_h,\psi_h)= \vdual{g}{\psi_h}_\Omega
  \text{ for all } \psi_h\in \widetilde\Ss^1(\Tt_h).
\end{align}
Indeed, as $\wilde\Ss^1(\Tt_h)$ is not a subset of $\cH^1(\Omega)$, the
finite element discretizations~\eqref{eq:problem:var:disc:u}-\eqref{eq:problem:var:disc}
are nonconforming, and the a priori analysis
for these problems has to be done accordingly.
First, we will show an $H^1(\Omega)$ bound for the error by applying the
Strang lemma. This leads to a best approximation term involving
discrete harmonic functions. In order to facilitate the application of polynomial approximation results,
we will use the following result, which tells us
that the best approximation of a harmonic function with discrete harmonic functions
is as good as the best approximation by piecewise polynomials.
\begin{lemma}\label{lem:cea:harmonic}
  Let $\phi\in \cH^1(\Omega)$. Then
  \begin{align*}
    \inf_{\phi_h\in \widetilde\Ss^1(\Tt_h)} \norm{\phi-\phi_h}{H^1(\Omega)}
    \lesssim
    \inf_{w_h\in \Ss^1(\Tt_h)} \norm{\phi-w_h}{H^1(\Omega)}
  \end{align*}
\end{lemma}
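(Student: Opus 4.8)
The plan is to start from an arbitrary $w_h\in\Ss^1(\Tt_h)$ and correct it so that it becomes discrete harmonic while not increasing the $H^1$ error by more than a constant factor. Given $w_h$, the natural correction is
\[
  \phi_h := w_h - \big(w_h - \Ee_h\trace w_h\big) = \Ee_h\trace w_h \;\in\; \widetilde\Ss^1(\Tt_h),
\]
but this only uses the trace of $w_h$ and throws away its interior values, so a sharper choice is $\phi_h := w_h + \big(\Ee_h\trace w_h - w_h\big)$... which is the same thing. So the real point is to compare $\phi - \Ee_h\trace w_h$ with $\phi - w_h$. Write
\[
  \phi - \Ee_h\trace w_h = (\phi - w_h) + \big(w_h - \Ee_h\trace w_h\big),
\]
so by the triangle inequality it suffices to bound $\norm{w_h-\Ee_h\trace w_h}{H^1(\Omega)}$ by $\norm{\phi-w_h}{H^1(\Omega)}$, up to a constant.

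The key observation is that $w_h - \Ee_h\trace w_h\in\Ss^1_0(\Tt_h)$, since both functions have trace $\trace w_h$ (using \eqref{dext:eq}). Moreover, by the defining property \eqref{eq:discharmext} of the discrete harmonic extension, $\Ee_h\trace w_h$ is the Galerkin projection of $w_h$ onto discrete harmonic functions in the $\vdual{\nabla\cdot}{\nabla\cdot}_\Omega$ inner product; equivalently, $w_h-\Ee_h\trace w_h$ is the $\vdual{\nabla\cdot}{\nabla\cdot}_\Omega$-orthogonal projection of $w_h$ onto $\Ss^1_0(\Tt_h)$. Since $\phi$ is (weakly) harmonic, $\vdual{\nabla\phi}{\nabla v_h}_\Omega=0$ for all $v_h\in\Ss^1_0(\Tt_h)$, so in fact $w_h-\Ee_h\trace w_h$ is also the orthogonal projection of $w_h-\phi$ onto $\Ss^1_0(\Tt_h)$. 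Hence
\[
  \norm{\nabla(w_h-\Ee_h\trace w_h)}{L_2(\Omega)} \le \norm{\nabla(\phi-w_h)}{L_2(\Omega)}.
\]
Then a Friedrichs/Poincar\'e inequality on $H^1_0(\Omega)$ upgrades this gradient bound to a full $H^1(\Omega)$ bound: $\norm{w_h-\Ee_h\trace w_h}{H^1(\Omega)}\lesssim \norm{\nabla(w_h-\Ee_h\trace w_h)}{L_2(\Omega)}\le \norm{\phi-w_h}{H^1(\Omega)}$. Combining with the triangle inequality gives $\norm{\phi-\Ee_h\trace w_h}{H^1(\Omega)}\lesssim\norm{\phi-w_h}{H^1(\Omega)}$, and taking the infimum over $w_h\in\Ss^1(\Tt_h)$ yields the claim.

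I expect the main (and only mild) obstacle to be the bookkeeping that identifies $w_h-\Ee_h\trace w_h$ as an orthogonal projection in the right inner product and the observation that harmonicity of $\phi$ lets one replace $w_h$ by $w_h-\phi$ inside that projection — everything after that is the triangle inequality and Friedrichs. One should also note that the constant in the final estimate depends only on $\Omega$ (through Friedrichs), not on $\Tt_h$, which is what makes the estimate uniform in $h$.
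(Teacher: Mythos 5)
Your proof is correct, and it takes a genuinely different --- and in fact leaner --- route than the paper's. Both arguments use the same candidate $\phi_h=\Ee_h\trace w_h$, but the mechanism for controlling the error differs. The paper bounds $\norm{\nabla(\phi-\Ee_h\trace w_h)}{L_2(\Omega)}$ via a Galerkin-orthogonality identity with the comparison function $J_h\Ee\trace w_h$, which brings in the Scott--Zhang operator (through~\eqref{sz:eq1},~\eqref{sz:eq2},~\eqref{sz:eq3}), the continuous harmonic extension $\Ee$, and the norm equivalence~\eqref{eq:H1norm} to recover the full $H^1(\Omega)$ norm from the gradient plus the $H^{1/2}(\Gamma)$ trace term. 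You instead write $\phi-\Ee_h\trace w_h=(\phi-w_h)+z_h$ with $z_h:=w_h-\Ee_h\trace w_h\in\Ss^1_0(\Tt_h)$ (by~\eqref{dext:eq}), note that for all $v_h\in\Ss^1_0(\Tt_h)$ one has $\vdual{\nabla z_h}{\nabla v_h}_\Omega=\vdual{\nabla w_h}{\nabla v_h}_\Omega=\vdual{\nabla(w_h-\phi)}{\nabla v_h}_\Omega$ (using~\eqref{eq:discharmext} and the weak harmonicity of $\phi$), so that taking $v_h=z_h$ and Cauchy--Schwarz give $\norm{\nabla z_h}{L_2(\Omega)}\le\norm{\nabla(\phi-w_h)}{L_2(\Omega)}$; the Friedrichs inequality on $H^1_0(\Omega)$ and the triangle inequality then finish, with a constant depending only on $\Omega$. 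This needs nothing beyond a C\'ea-type orthogonality step and Poincar\'e--Friedrichs, avoiding $J_h$, $\Ee$, and~\eqref{eq:H1norm} altogether; in fact your observation that $w_h-\Ee_h\trace w_h\in\Ss^1_0(\Tt_h)$ shows the paper could have used $w_h$ itself as the comparison function in its orthogonality identity, shortcutting its chain of estimates. One cosmetic caveat: calling $\Ee_h\trace w_h$ the ``Galerkin projection'' in the $\vdual{\nabla\cdot}{\nabla\cdot}_\Omega$ inner product is only a semi-inner-product statement on $H^1(\Omega)$ (constants have vanishing gradient), but your argument never uses more than the Cauchy--Schwarz step you actually perform on $\Ss^1_0(\Tt_h)$, where the gradient form is definite, so this is harmless.
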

\begin{proof}
  Let $w_h\in\Ss^1(\Tt_h)$ be arbitrary. Then
  \begin{align*}
    \inf_{\phi_h\in\widetilde\Ss^1(\Tt_h)}\norm{\phi-\phi_h}{H^1(\Omega)} \leq
    \norm{\phi-\Ee_h\trace w_\Tt}{H^1(\Omega)}.
  \end{align*}
  According to the norm equivalence~\eqref{eq:H1norm} and the identity~\eqref{dext:eq}, we have
  \begin{align*}
    \norm{\phi-\Ee_h\trace w_h}{H^1(\Omega)} \simeq \norm{\nabla(\phi-\Ee_h\trace w_h)}{L_2(\Omega)}
    + \norm{\trace \phi-\trace w_h}{H^{1/2}(\Gamma)}.
  \end{align*}
  We have the identity
  \begin{align*}
    \vdual{\nabla (\phi-\Ee_h\trace w_h)}{\nabla (\phi-\Ee_h\trace w_h)}
    = \vdual{\nabla (\phi-\Ee_h\trace w_h)}{\nabla (\phi-J_h\Ee\trace w_h)},
  \end{align*}
  which follows from the fact that $\phi$ and $\Ee_h \gamma w_\Tt$ are both discrete harmonic
  and $\Ee_h\trace w_h - J_h\Ee\trace w_h\in \Ss^1_0(\Tt_h)$ due 
  to~\eqref{sz:eq2} and~\eqref{dext:eq}. Using Cauchy-Schwarz, this yields
  \begin{align*}
    \norm{\nabla (\phi-\Ee_h\gamma w_h)}{L_2(\Omega)}
    \leq \norm{\nabla (\phi-J_h\Ee\gamma w_h)}{L_2(\Omega)}
    &\leq \norm{\phi-J_h \phi}{H^1(\Omega)} + \norm{J_h\Ee\gamma (\phi- w_h)}{H^1(\Omega)}\\
    &\lesssim \norm{\phi-J_h \phi}{H^1(\Omega)} + \norm{\gamma(\phi-w_h)}{H^{1/2}(\Gamma)}\\
    &\lesssim \norm{(\phi-w_h)-J_h(\phi-w_h)}{H^1(\Omega)} + \norm{\gamma(\phi-w_h)}{H^{1/2}(\Gamma)}\\
    &\lesssim \norm{\phi-w_h}{H^1(\Omega)},
  \end{align*}
  where we have also used the triangle inequality, $\phi = \Ee\gamma \phi$,~\eqref{sz:eq1},
  \eqref{sz:eq3}, and again~\eqref{eq:H1norm}.
\end{proof}
We will also need the following simple application of the Aubin-Nitsche trick.
\begin{lemma}\label{lem:an:discrete}
  Suppose that $\psi_h\in\widetilde\Ss^1(\Tt_h)$. Then it holds
  \begin{align*}
    \norm{\Ee\psi_h - \psi_h}{L_2(\Omega)} \leq
    \norm{\nabla(\Ee\psi_h - \psi_h)}{L_2(\Omega)}
    \sup_{\norm{f}{L_2(\Omega)}=1} \inf_{v_h\in \Ss^1_0(\Tt_h)} \norm{v_f-v_h}{H^1(\Omega)},
  \end{align*}
  where $v_f\in H^1_0(\Omega)$ is the solution of $-\Delta v_f=f$ and $v_f = 0$ on $\Gamma$.
\end{lemma}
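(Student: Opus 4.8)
The plan is to apply the Aubin--Nitsche duality argument to the function $e := \Ee\psi_h - \psi_h \in H^1_0(\Omega)$. Note that $e$ indeed lies in $H^1_0(\Omega)$ since $\Ee\psi_h$ and $\psi_h$ share the same trace (both equal $\trace\psi_h$, using that $\psi_h$ is discrete harmonic so its trace is in $\Ss^1(\Tt_h)|_\Gamma$ and $\gamma\Ee_h\gamma p_h = \gamma p_h$). The first inequality $\norm{e}{L_2(\Omega)} \leq \norm{\nabla e}{L_2(\Omega)}$ is just the Poincar\'e--Friedrichs inequality on $H^1_0(\Omega)$ (with constant one, or absorbed; if a constant is needed it is harmless but one typically normalizes it away, or the claimed inequality should carry a hidden constant --- in any case this part is trivial).

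For the main estimate, I would introduce the dual problem: given $f\in L_2(\Omega)$ with $\norm{f}{L_2(\Omega)}=1$, let $v_f\in H^1_0(\Omega)$ solve $-\Delta v_f = f$ weakly, i.e. $\vdual{\nabla v_f}{\nabla w}_\Omega = \vdual{f}{w}_\Omega$ for all $w\in H^1_0(\Omega)$. Testing with $w = e$ gives $\vdual{f}{e}_\Omega = \vdual{\nabla v_f}{\nabla e}_\Omega$. Now the key observation: $e = \Ee\psi_h - \psi_h$ and both $\Ee\psi_h$ (weakly harmonic) and $\psi_h$ (discrete harmonic) satisfy $\vdual{\nabla(\Ee\psi_h)}{\nabla \chi}_\Omega = 0$ for $\chi \in H^1_0(\Omega)$, respectively $\vdual{\nabla\psi_h}{\nabla\chi_h}_\Omega = 0$ for $\chi_h\in\Ss^1_0(\Tt_h)$. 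Hence for any $v_h\in\Ss^1_0(\Tt_h)$,
\begin{align*}
  \vdual{\nabla e}{\nabla v_h}_\Omega
  = \vdual{\nabla(\Ee\psi_h)}{\nabla v_h}_\Omega - \vdual{\nabla\psi_h}{\nabla v_h}_\Omega
  = 0 - 0 = 0,
\end{align*}
since $\Ee\psi_h$ is weakly harmonic and $v_h\in H^1_0(\Omega)$, and $\psi_h$ is discrete harmonic and $v_h\in\Ss^1_0(\Tt_h)$. (Here we use $\psi_h\in\widetilde\Ss^1(\Tt_h)$, so $\psi_h = \Ee_h q_h$ for some $q_h$, hence discrete harmonic in the sense of~\eqref{eq:discharmext}.)

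Therefore, subtracting $\vdual{\nabla e}{\nabla v_h}_\Omega = 0$ and using Cauchy--Schwarz,
\begin{align*}
  \vdual{f}{e}_\Omega = \vdual{\nabla v_f}{\nabla e}_\Omega = \vdual{\nabla(v_f - v_h)}{\nabla e}_\Omega
  \leq \norm{\nabla(v_f - v_h)}{L_2(\Omega)}\,\norm{\nabla e}{L_2(\Omega)}
  \leq \norm{v_f - v_h}{H^1(\Omega)}\,\norm{\nabla e}{L_2(\Omega)}.
\end{align*}
Taking the infimum over $v_h\in\Ss^1_0(\Tt_h)$ and then the supremum over admissible $f$ gives, by the $L_2$-duality characterization of the norm $\norm{e}{L_2(\Omega)} = \sup_{\norm{f}{L_2(\Omega)}=1}\vdual{f}{e}_\Omega$, precisely the claimed estimate. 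There is no real obstacle here --- the only point requiring a moment's care is the Galerkin-orthogonality identity $\vdual{\nabla e}{\nabla v_h}_\Omega = 0$, which rests on the two distinct notions of harmonicity (weak vs.\ discrete) for the two pieces of $e$, and this is exactly why the test function must be taken in $\Ss^1_0(\Tt_h)$ rather than in all of $H^1_0(\Omega)$.
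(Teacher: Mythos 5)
Your argument is correct and is essentially the paper's own proof: test the dual problem with $e=\Ee\psi_h-\psi_h\in H^1_0(\Omega)$, use that $e$ is orthogonal to $\nabla v_h$ for $v_h\in\Ss^1_0(\Tt_h)$ (weak harmonicity of $\Ee\psi_h$ plus discrete harmonicity of $\psi_h$), apply Cauchy--Schwarz, and conclude by $L_2$-duality. The opening aside about Poincar\'e--Friedrichs is unnecessary, since the claimed bound is exactly the product estimate your duality argument delivers with constant one.
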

\begin{proof}
  Due to the fact that $\psi_h-\Ee\psi_h\in H^1_0(\Omega)$ and discrete harmonic, we 
  have for arbitrary $f\in L_2(\Omega)$ and $v_\Tt\in\Ss^1_0(\Tt_h)$ the identity
  \begin{align*}
    \vdual{\psi_h-\Ee\trace \psi_h}{f}_\Omega
    = \vdual{\nabla v_f}{\nabla (\psi_h-\Ee\trace \psi_h)}_\Omega
    = \vdual{\nabla (v_f - v_h)}{\nabla (\psi_h-\Ee\trace \psi_h)}_\Omega
  \end{align*}
  An application of Cauchy-Schwarz finishes the proof.
\end{proof}
Now we can show an $H^1(\Omega)$ bound for the error.
\begin{theorem}\label{thm:cea}
  With $\phi\in \cH^1(\Omega)$ the solution of~\eqref{eq:problem:var} and
  $\phi_h\in\widetilde\Ss^1(\Tt_h)$ the solution of~\eqref{eq:problem:var:disc}, there holds
  \begin{align}
    \norm{\phi-\phi_h}{H^1(\Omega)} \lesssim \inf_{w_h\in\Ss^1(\Tt_h)}\norm{\phi-w_h}{H^1(\Omega)}
    + \sup_{\norm{f}{L_2(\Omega)}=1} \inf_{v_h\in \Ss^1_0(\Tt_h)} \norm{v_f-v_h}{H^1(\Omega)},
    \label{thm:apriori:eq}
  \end{align}
  where $v_f\in H^1_0(\Omega)$ is the solution of $-\Delta v_f=f$ and $v_f = 0$ on $\Gamma$.
\end{theorem}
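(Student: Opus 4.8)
The plan is to apply the second Strang lemma for the nonconforming discretization~\eqref{eq:problem:var:disc} of~\eqref{eq:problem:var}. The bilinear form $b$ is elliptic and bounded uniformly with respect to $h$ on $H^1(\Omega)$, hence also on $\wilde\Ss^1(\Tt_h)$, so Strang's lemma gives
\begin{align*}
  \norm{\phi-\phi_h}{H^1(\Omega)} \lesssim \inf_{\psi_h\in\wilde\Ss^1(\Tt_h)}\norm{\phi-\psi_h}{H^1(\Omega)}
  + \sup_{\psi_h\in\wilde\Ss^1(\Tt_h)\setminus\{0\}} \frac{\abs{b(\phi,\psi_h)-\vdual{g}{\psi_h}_\Omega}}{\norm{\psi_h}{H^1(\Omega)}}.
\end{align*}
The first term, the best-approximation error by discrete harmonic functions, is controlled by $\inf_{w_h\in\Ss^1(\Tt_h)}\norm{\phi-w_h}{H^1(\Omega)}$ via Lemma~\ref{lem:cea:harmonic}, which is exactly the first term on the right-hand side of~\eqref{thm:apriori:eq}.

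The crux is therefore the consistency error. For $\psi_h\in\wilde\Ss^1(\Tt_h)$ we may not test the continuous equation~\eqref{eq:problem:var} with $\psi_h$ directly, since $\psi_h\notin\cH^1(\Omega)$; but $\Ee\trace\psi_h=\Ee\psi_h\in\cH^1(\Omega)$ is admissible. Subtracting, the consistency error becomes
\begin{align*}
  b(\phi,\psi_h)-\vdual{g}{\psi_h}_\Omega = b(\phi,\psi_h-\Ee\psi_h)-\vdual{g}{\psi_h-\Ee\psi_h}_\Omega,
\end{align*}
where $\psi_h-\Ee\psi_h\in H^1_0(\Omega)$ and is discrete harmonic. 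I would now exploit this: write out $b(\phi,\psi_h-\Ee\psi_h) = (\lambda+1)\vdual{\phi}{\psi_h-\Ee\psi_h}_\Omega + \lambda\vdual{\nabla\phi}{\nabla(\psi_h-\Ee\psi_h)}_\Omega$. Since $\phi\in\cH^1(\Omega)$ is weakly harmonic and $\psi_h-\Ee\psi_h\in H^1_0(\Omega)$, the gradient term vanishes by~\eqref{eq:harmfun}. The $g$-term, with $g$ being the $L_2$-pairing partner, combines with the remaining $(\lambda+1)\vdual{\phi}{\psi_h-\Ee\psi_h}_\Omega$; recalling that $\phi$ solves $b(\phi,v)=\vdual{g}{v}_\Omega$ for all $v\in\cH^1(\Omega)$ does not immediately help since $\psi_h-\Ee\psi_h\notin\cH^1(\Omega)$, but we do not need it: the consistency error reduces to $(\lambda+1)\vdual{\phi}{\psi_h-\Ee\psi_h}_\Omega - \vdual{g}{\psi_h-\Ee\psi_h}_\Omega$, an $L_2(\Omega)$ pairing of the fixed function $(\lambda+1)\phi-g$ against $\psi_h-\Ee\psi_h$.

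At this stage I would invoke the Aubin--Nitsche estimate of Lemma~\ref{lem:an:discrete}, applied with $\psi_h$ there equal to our $\psi_h$, which bounds $\norm{\psi_h-\Ee\psi_h}{L_2(\Omega)}$ by $\norm{\nabla(\psi_h-\Ee\psi_h)}{L_2(\Omega)}$ times the duality factor $\sup_{\norm{f}{L_2(\Omega)}=1}\inf_{v_h\in\Ss^1_0(\Tt_h)}\norm{v_f-v_h}{H^1(\Omega)}$. The gradient $\norm{\nabla(\psi_h-\Ee\psi_h)}{L_2(\Omega)}$ is in turn bounded by $\norm{\nabla\psi_h}{L_2(\Omega)}\le\norm{\psi_h}{H^1(\Omega)}$ using~\eqref{eq:harm:best} (the function $\Ee\psi_h=\Ee\trace\psi_h$ realizes the best gradient approximation of $\psi_h$ by harmonic functions, so $\norm{\nabla(\psi_h-\Ee\psi_h)}{L_2(\Omega)}\le\norm{\nabla\psi_h}{L_2(\Omega)}$). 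Hence the consistency error divided by $\norm{\psi_h}{H^1(\Omega)}$ is bounded, up to the constant $(\lambda+1)\norm{\phi}{H^1(\Omega)}+\norm{g}{L_2(\Omega)}\lesssim\norm{u_d}{L_2(\Omega)}$ (by Lemma~\ref{lem:new:wellposed}), by the duality factor, which is precisely the second term in~\eqref{thm:apriori:eq}. The main obstacle is the bookkeeping in the consistency step — correctly identifying that testing with the harmonic extension is legitimate, that the Dirichlet form drops out, and that what remains is an $L_2$-pairing amenable to Aubin--Nitsche — rather than any deep estimate; once that reduction is in place the rest is routine.
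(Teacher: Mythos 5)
Your proposal is correct and follows essentially the same route as the paper: the second Strang lemma, consistency handled by testing the continuous problem with $\Ee\trace\psi_h$ so that the Dirichlet term drops out for $\psi_h-\Ee\trace\psi_h\in H^1_0(\Omega)$, the resulting $L_2$-pairing bounded via Lemma~\ref{lem:an:discrete}, and Lemma~\ref{lem:cea:harmonic} to pass from discrete harmonic to full finite element best approximation. The only cosmetic difference is your explicit tracking of the constant $(\lambda+1)\norm{\phi}{H^1(\Omega)}+\norm{g}{L_2(\Omega)}$ (which in the theorem's generality is controlled by $\norm{g}{L_2(\Omega)}$ rather than $\norm{u_d}{L_2(\Omega)}$), which the paper simply absorbs into the implicit constant.
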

\begin{proof}
  For $w_h\in\widetilde\Ss^1(\Tt_h)$, a Strang-type argument shows
  \begin{align}\label{thm:cea:eq3}
    \norm{\phi_h-w_h}{H^1(\Omega)} &\lesssim \norm{\phi-w_h}{H^1(\Omega)} +
    \sup_{\psi_h\in\widetilde\Ss^1(\Tt)}
    \frac{\abs{b(\phi,\psi_h)-\vdual{g}{\psi_h}_\Omega}}{\norm{\psi_h}{H^1(\Omega)}}.
  \end{align}
  For $\psi_h\in\widetilde\Ss^1(\Tt_h)$, we have
  \begin{align*}
    b(\phi,\psi_h)-\vdual{g}{\psi_h}_\Omega
    &= b(\phi,\psi_h - \Ee\trace \psi_h) - \vdual{g}{\psi_h-\Ee\trace \psi_h}_\Omega\\
    &= (\lambda+1)\vdual{\phi}{\psi_h-\Ee\trace \psi_h}_\Omega
    - \vdual{g}{\psi_h-\Ee\trace \psi_h}_\Omega.
  \end{align*}
  Here, the first identity follows from~\eqref{eq:problem:var} and the second one
  follow from the fact that $\phi\in\cH^1(\Omega)$ and $\psi_h-\Ee\trace \psi_h\in H^1_0(\Omega)$.
  Hence, an application of Lemma~\ref{lem:an:discrete} and
  $\norm{\nabla(\psi_h-\Ee\trace \psi_h)}{L_2(\Omega)} \lesssim \norm{\psi_h}{H^1(\Omega)}$ shows
  \begin{align}\label{thm:cea:eq1}
    \begin{split}
      \abs{b(\phi,\psi_h)-\vdual{g}{\psi_h}_\Omega}
      &\leq \norm{g}{L_2(\Omega)}\norm{\psi_h - \Ee\psi_h}{L_2(\Omega)}\\
      &\lesssim 
      \norm{\psi_h}{H^1(\Omega)}
      \sup_{\norm{f}{L_2(\Omega)}=1} \inf_{v_h\in \Ss^1_0(\Tt_h)} \norm{v_f-v_h}{H^1(\Omega)}.
    \end{split}
  \end{align}
  Using the bound~\eqref{thm:cea:eq1} on the right-hand side of~\eqref{thm:cea:eq3}
  and the triangle inequality, this yields
  \begin{align*}
    \norm{\phi-\phi_h}{H^1(\Omega)} \lesssim \inf_{w_h\in\widetilde\Ss^1(\Tt_h)}\norm{\phi-w_h}{H^1(\Omega)}
    + \sup_{\norm{f}{L_2(\Omega)}=1} \inf_{v_h\in \Ss^1_0(\Tt_h)} \norm{v_f-v_h}{H^1(\Omega)}.
  \end{align*}
  An application of Lemma~\ref{lem:cea:harmonic} finishes the proof.
\end{proof}
Now we can prove an $L_2(\Omega)$ bound for the error.
\begin{theorem}\label{thm:an}
  Let $u\in\cH^1(\Omega)$ be the solution of problem~\eqref{eq:problem:new} and
  $u_h\in\wilde\Ss^1(\Tt_h)$ its finite element approximation. Then it holds
  \begin{align*}
    \norm{u - u_h}{L_2(\Omega)} \lesssim &\norm{u-u_h}{H^1(\Omega)}
    \Bigl( \sup_{\norm{g}{L_2(\Omega)}=1} \inf_{w_h\in\Ss^1(\Tt_h)}\norm{\phi_g-w_h}{H^1(\Omega)}
    +\sup_{\norm{f}{L_2(\Omega)}=1} \inf_{v_h\in \Ss^1_0(\Tt_h)} \norm{v_f-v_h}{H^1(\Omega)}
    \Bigr)\\
    &+
    \sup_{\norm{g}{L_2(\Omega)}=1} \inf_{w_g\in\Ss^1(\Tt_h)}\norm{\phi_g-w_h}{H^1(\Omega)}
    \sup_{\norm{f}{L_2(\Omega)}=1} \inf_{v_h\in \Ss^1_0(\Tt_h)} \norm{v_f-v_h}{H^1(\Omega)}\\
    &+ \Bigl( 
    \sup_{\norm{f}{L_2(\Omega)}=1} \inf_{v_h\in \Ss^1_0(\Tt_h)} \norm{v_f-v_h}{H^1(\Omega)}
    \Bigr)^2,
  \end{align*}
  where $v_f\in H^1_0(\Omega)$ is the solution of $-\Delta v_f=f$ and $v_f = 0$ on $\Gamma$,
  and $\phi_g\in \cH^1(\Omega)$ is the solution of~\eqref{eq:problem:var}.
\end{theorem}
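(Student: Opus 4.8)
The plan is to run the Aubin--Nitsche duality argument in the nonconforming setting, keeping careful track of the consistency error that already appeared in Theorem~\ref{thm:cea}. Set $e := u - u_h$ and let $\phi_g\in\cH^1(\Omega)$ be the solution of~\eqref{eq:problem:var} with right-hand side $g$, i.e.\ $b(\phi_g,\psi) = \vdual{g}{\psi}_\Omega$ for all $\psi\in\cH^1(\Omega)$. By Lemma~\ref{lem:new:wellposed} this is well-posed and $\norm{\phi_g}{H^1(\Omega)}\lesssim\norm{g}{L_2(\Omega)}$. Then $\norm{e}{L_2(\Omega)} = \sup_{\norm{g}{L_2(\Omega)}=1}\vdual{e}{g}_\Omega$, and the first step is to rewrite $\vdual{e}{g}_\Omega$. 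The obstruction to just writing $\vdual{e}{g}_\Omega = b(e,\phi_g)$ is twofold: $u_h\in\wilde\Ss^1(\Tt_h)$ is not in $\cH^1(\Omega)$, so $b(u_h,\phi_g)$ is not directly accessible through the discrete equation~\eqref{eq:problem:var:disc:u}; and $\phi_g\notin\wilde\Ss^1(\Tt_h)$, so it cannot be used as a discrete test function. Both defects are measured by the same ``harmonicity gap'' term $\psi\mapsto\norm{\psi - \Ee\trace\psi}{L_2(\Omega)}$ that Lemma~\ref{lem:an:discrete} controls.

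\textbf{Main steps.} First I would write $\vdual{e}{g}_\Omega = b(e, \phi_g) - \lambda\vdual{\nabla e}{\nabla\phi_g}_\Omega - \vdual{u}{\phi_g}_\Omega + \vdual{u_d}{\phi_g}_\Omega$ using $\vdual{g}{\phi_g}_\Omega$ appropriately; more cleanly, since $u$ solves~\eqref{eq:foc:2} and $\phi_g\in\cH^1(\Omega)$, we have $b(u,\phi_g) = \vdual{u_d}{\phi_g}_\Omega$, while $b(\phi_g,u) = \vdual{g}{u}_\Omega$ because $u\in\cH^1(\Omega)$; so by symmetry of $b$, $\vdual{g}{u}_\Omega = \vdual{u_d}{\phi_g}_\Omega$. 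Applying this also with $u$ replaced by an arbitrary $\wilde u_h\in\wilde\Ss^1(\Tt_h)$ is where the consistency error enters. The cleanest route is: write $\vdual{e}{g}_\Omega = b(\phi_g, e)$ up to a consistency term, then insert the discrete approximation $\phi_{g,h}\in\wilde\Ss^1(\Tt_h)$ of $\phi_g$ from~\eqref{eq:problem:var:disc} and use Galerkin orthogonality of the discrete problems to replace $e$ by $u - w_h$ for any $w_h$, and $\phi_g$ by $\phi_g - \phi_{g,h}$. This leaves three groups of terms: (i) $b(\phi_g - \phi_{g,h}, u - u_h)$, bounded by $\norm{\phi_g - \phi_{g,h}}{H^1(\Omega)}\norm{e}{H^1(\Omega)}$ and then by Theorem~\ref{thm:cea} applied to $\phi_g$, giving $\norm{e}{H^1(\Omega)}$ times (best approximation of $\phi_g$ $+$ the $v_f$-consistency term); (ii) the consistency error coming from testing the continuous equation for $u$ against $\phi_{g,h}$, which by the computation in the proof of Theorem~\ref{thm:cea} equals $(\lambda+1)\vdual{u - u_d/(\lambda+1)\text{-type term}}{\phi_{g,h} - \Ee\trace\phi_{g,h}}_\Omega$ — bounded via Lemma~\ref{lem:an:discrete} by $\norm{\phi_{g,h}}{H^1(\Omega)}$ times the $v_f$-term, and $\norm{\phi_{g,h}}{H^1(\Omega)}\lesssim\norm{g}{L_2(\Omega)}=1$; and (iii) the symmetric consistency error from testing the discrete equation for $\phi_{g,h}$ against (the harmonic projection of) $u$, which by the same lemma is bounded by $\norm{u_h - \Ee\trace u_h}{L_2(\Omega)}$ times something, and $\norm{u_h - \Ee\trace u_h}{L_2(\Omega)} = \norm{(u_h - u) - \Ee\trace(u_h-u)}{L_2(\Omega)}\lesssim\norm{e}{H^1(\Omega)}\cdot(v_f\text{-term})$ by Lemma~\ref{lem:an:discrete} again, since $u = \Ee\trace u$. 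Collecting: group (i) contributes $\norm{e}{H^1(\Omega)}\cdot(\text{sup-inf of }\phi_g + v_f\text{-term})$, group (ii) contributes $(\text{sup-inf of }\phi_g + v_f\text{-term})\cdot(v_f\text{-term})$ — wait, more carefully it contributes just the $v_f$-term, but when combined with the substitution $e\to u-w_h$ it picks up the $\phi_g$ best-approximation factor — and group (iii) contributes $\norm{e}{H^1(\Omega)}\cdot(v_f\text{-term})$ plus, after the $w_h$ substitution, $(v_f\text{-term})^2$ and the cross term. Taking the supremum over $\norm{g}{L_2(\Omega)}=1$ turns the $\phi_g$-dependent infima into the stated suprema, and one obtains exactly the three-line right-hand side of the theorem.

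\textbf{The hard part} is the careful bookkeeping in step (ii)/(iii): one must expand $\vdual{e}{g}_\Omega$ so that \emph{both} the primal residual (continuous equation tested against a discrete function) and the dual residual (discrete equation tested against a continuous function) appear, and then recognize that each residual is of the form ``$b$ or $\vdual{\cdot}{\cdot}_\Omega$ paired with $\psi - \Ee\trace\psi$'' so that Lemma~\ref{lem:an:discrete} applies. A mild subtlety is that in step (iii) the natural object is $\norm{u_h - \Ee\trace u_h}{L_2(\Omega)}$ rather than $\norm{u - u_h}{L_2(\Omega)}$, and one rewrites it using $u - \Ee\trace u = 0$ before invoking Lemma~\ref{lem:an:discrete}; this is what prevents a circular bound. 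Once the three residual terms are identified and each is split by inserting the respective best-approximant ($\phi_{g,h}$ for $\phi_g$, and an arbitrary $w_h\in\wilde\Ss^1(\Tt_h)$ together with Lemma~\ref{lem:cea:harmonic} for $u$), the estimate assembles by the triangle inequality and Cauchy--Schwarz; nothing beyond routine manipulation remains, and the final form follows by taking suprema over $g$ and $f$.
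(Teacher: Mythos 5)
Your overall strategy is the same as the paper's: the paper cites the nonconforming Aubin--Nitsche lemma from Braess, which produces exactly your three groups --- the product $\norm{u-u_h}{H^1(\Omega)}\norm{\phi_g-\phi_h}{H^1(\Omega)}$, the primal consistency residual $b(u,\phi_g-\phi_h)-\vdual{u_d}{\phi_g-\phi_h}$, and the dual consistency residual $b(u-u_h,\phi_g)-\vdual{g}{u-u_h}$ --- and then handles them with Lemma~\ref{lem:an:discrete}, the best-approximation property~\eqref{eq:harm:best}, and Theorem~\ref{thm:cea}, as you do for groups (i) and (iii). Your group (iii) is essentially right: one applies Lemma~\ref{lem:an:discrete} to $u_h\in\wilde\Ss^1(\Tt_h)$ (not to $u_h-u$, which is not discrete harmonic) and then uses~\eqref{eq:harm:best} with the harmonic $u$ to pass from $\norm{\nabla(u_h-\Ee\trace u_h)}{L_2(\Omega)}$ to $\norm{\nabla(u-u_h)}{L_2(\Omega)}$; your rewriting via $u=\Ee\trace u$ amounts to the same thing.

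The genuine gap is in group (ii). As written, you bound that residual by $\norm{\phi_{g,h}}{H^1(\Omega)}$ times the $v_f$-term and then invoke stability $\norm{\phi_{g,h}}{H^1(\Omega)}\lesssim 1$. That yields a standalone first-order $v_f$-term, which is strictly weaker than the statement (every term on the theorem's right-hand side is a product of two small quantities) and would destroy the $\OO(h^2)$ rate the duality argument is meant to give. Your attempted repair --- that ``the substitution $e\to u-w_h$ picks up the $\phi_g$ best-approximation factor'' --- is not a valid mechanism: the residual $b(u,\phi_g-\phi_h)-\vdual{u_d}{\phi_g-\phi_h}$ contains neither $e$ nor any Galerkin-orthogonality slack to exploit, and group (iii) cannot supply the cross term either. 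The correct treatment mirrors your own group (iii): since $\phi_g-\Ee\trace\phi_h\in\cH^1(\Omega)$ may be used as a test function for $u$, the residual equals $(\lambda+1)\vdual{u}{\Ee\trace\phi_h-\phi_h}_\Omega-\vdual{u_d}{\Ee\trace\phi_h-\phi_h}_\Omega$, so Lemma~\ref{lem:an:discrete} applied to $\phi_h$ gives the factor $\norm{\nabla(\phi_h-\Ee\trace\phi_h)}{L_2(\Omega)}$, which by~\eqref{eq:harm:best} (with the harmonic $\phi_g$) is at most $\norm{\nabla(\phi_g-\phi_h)}{L_2(\Omega)}$; it is this dual \emph{error}, not the dual norm, that is then estimated by Theorem~\ref{thm:cea}, producing the cross term and the squared $v_f$-term of the statement. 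With that correction your argument coincides with the paper's proof.
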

\begin{proof}
  The Aubin-Nitsche trick for nonconforming methods, cf.~\cite[Chapter~III, Lemma~1.4]{braess}
  states
  \begin{align*}
    \norm{u-u_h}{L_2(\Omega)}
    \leq \sup_{g\in L_2(\Omega)} \frac{1}{\norm{g}{L_2(\Omega)}}\Bigl\{
      &C \norm{u-u_h}{H^1(\Omega)}\norm{\phi_g - \phi_h}{H^1(\Omega)}\\
      &+ \abs{b(u-u_h,\phi_g) - \vdual{g}{u-u_h}}\\
      &+ \abs{b(u,\phi_g-\phi_h) - \vdual{u_d}{\phi_g-\phi_h}}
    \Bigr\},
  \end{align*}
  where $\phi_g\in \cH^1(\Omega)$ denotes the solution of~\eqref{eq:problem:var} and
  $\phi_h\in\wilde\Ss^1(\Tt_h)$ its nonconforming finite element approximation~\eqref{eq:problem:var:disc}.
  As $\phi_g$ solves~\eqref{eq:problem:var}, we conclude
  \begin{align*}
    b(u-u_h,\phi_g) - \vdual{g}{u-u_h}_\Omega
    &= b(\Ee u_h-u_h,\phi_g) - \vdual{g}{\Ee u_h-u_h}_\Omega\\
    &= (\lambda+1)\vdual{\Ee u_h-u_h}{\phi_g}_\Omega - \vdual{g}{\Ee u_h-u_h}_\Omega,
  \end{align*}
  where in the last step we used $\phi_g\in \cH^1(\Omega)$ and $\Ee u_h-u_h\in H^1_0(\Omega)$.
  Cauchy-Schwarz and~\eqref{lem:new:wellposed:eq1} show
  \begin{align*}
    \abs{b(u-u_h,\phi_g) - \vdual{g}{u-u_h}_\Omega}
    &\leq \norm{g}{L_2(\Omega)}\norm{\Ee u_h-u_h}{L_2(\Omega)}\\
    &\lesssim \norm{g}{L_2(\Omega)}
    \norm{\nabla(u - u_h)}{L_2(\Omega)} \sup_{\norm{f}{L_2(\Omega)}=1} \inf_{v_h\in \Ss^1_0(\Tt_h)} \norm{v_f-v_h}{H^1(\Omega)},
  \end{align*}
  where we have used Lemmas~\ref{lem:an:discrete} and~\eqref{eq:harm:best} in the last step.
  The same argument can be applied to the second term, i.e.,
  \begin{align*}
    b(u,\phi_g-\phi_h) - \vdual{u_d}{\phi_g-\phi_h}
    &= b(u,\Ee\phi_h-\phi_h) - \vdual{u_d}{\Ee\phi_h-\phi_h}\\
    &= (\lambda+1)\vdual{u}{\Ee \phi_h-\phi_h}_\Omega - \vdual{u_d}{\Ee\phi_h-\phi_h}_\Omega,
  \end{align*}
  and again Lemmas~\ref{lem:an:discrete} and~\eqref{eq:harm:best} show
  \begin{align*}
    \abs{b(u,\phi_g-\phi_h) - \vdual{u_d}{\phi_g-\phi_h}}
    &\lesssim \norm{u_d}{L_2(\Omega)}\norm{\Ee\phi_h-\phi_h}{L_2(\Omega)}\\
    &\lesssim \norm{u_d}{L_2(\Omega)}\norm{\nabla(\phi_g-\phi_h)}{L_2(\Omega)}
    \sup_{\norm{f}{L_2(\Omega)}=1} \inf_{v_h\in \Ss^1_0(\Tt_h)} \norm{v_f-v_h}{H^1(\Omega)}
  \end{align*}
  Applying Theorem~\ref{thm:cea} to $\norm{\phi_g-\phi_h}{H^1(\Omega)}$
  and arranging the resulting terms proves the statement.
\end{proof}
We can now prove Theorem~\ref{thm:est}.
\begin{proof}[Proof of Theorem~\ref{thm:est}]
  According to Lemma~\ref{lem:new:regularity}, we have the regularity $u\in H^2(\Omega)$.
  Hence, ubiquitous approximation results in Sobolev spaces prove
  \begin{align*}
    \inf_{w_h\in\Ss^1(\Tt_h)}\norm{u - w_h}{H^1(\Omega)} = \OO(h).
  \end{align*}
  If $v_f\in H^1_0(\Omega)$ is the solution of $-\Delta v_f=f$ and $v_f = 0$ on $\Gamma$,
  elliptic regularity results state that $v_f\in H^2(\Omega)$
  and $\norm{v_f}{H^2(\Omega)}\lesssim\norm{f}{L_2(\Omega)}$.
  Again, approximation results in Sobolev spaces yield
  \begin{align*}
    \sup_{\norm{f}{L_2(\Omega)}=1} \inf_{v_h\in \Ss^1_0(\Tt_h)} \norm{v_f-v_h}{H^1(\Omega)} = \OO(h).
  \end{align*}
  Theorem~\ref{thm:cea} then shows $\norm{u-u_h}{H^1(\Omega)}=\OO(h)$.
  The statement $\norm{u-u_h}{H^1(\Omega)}=\OO(h^2)$ is shown analogously
  with Theorem~\ref{thm:an}.
  The estimate $\norm{u-u_h}{H^{1/2}(\Gamma)}=\OO(h)$ for the trace of the error
  then follows by the standard trace inequality. The multiplicative trace inequality
  \begin{align*}
    \norm{v}{L_2(\Gamma)} \lesssim \norm{v}{L_2(\Omega)}^{1/2}\norm{v}{H^1(\Omega)}^{1/2},
  \end{align*}
  cf.~\cite[Theorem 1.6.6]{BrennerS_08}, shows the error estimate in $L_2(\Gamma)$.
\end{proof}
\begin{proof}[Proof of Theorem~\ref{thm:cea:constrained}]
  We will follow ideas from~\cite{Falk_74}. Due to the ellipticity and symmetry of $b$
  (we denote the ellipticity constant by $\alpha_b$),
  we have for arbitrary $v\in\cH^1(\Omega)$ and $w_h\in\wilde\Ss^1_{\rm ad}(\Tt_h)$
  \begin{align}\label{thm:est:eq1}
    \begin{split}
      \alpha_b\norm{u-u_h}{H^1(\Omega)}^2 &\leq b(u-u_h,u-u_h)\\
      &\leq \vdual{u_d}{u_h-v}_\Omega - b(u,u_h-v) + \vdual{u_d}{u-w_h}_\Omega - b(u_h,u-w_h)\\
      &= \vdual{u_d}{u_h-v}_\Omega - b(u,u_h-v) + \vdual{u_d}{u-w_h}_\Omega - b(u,u-w_h)
      + b(u-u_h,u-w_h).
    \end{split}
  \end{align}
  Consider the first two terms on the right-hand side of~\eqref{thm:est:eq1}.
  We set $v=\Ee u_h$ and conclude
  \begin{align*}
    \abs{\vdual{u_d}{u_h-v}_\Omega - b(u,u_h-v)}
    &= \abs{\vdual{u_d}{u_h-\Ee u_h}_\Omega - (\lambda+1)\vdual{u}{u_h-\Ee u_h}_\Omega}\\
    &\leq \norm{u_d - (\lambda+1)u}{L_2(\Omega)}\norm{u_h-\Ee u_h}{L_2(\Omega)}\\
    &\leq C \norm{\nabla(\Ee u_h - u_h)}{L_2(\Omega)}\cdot 
    \sup_{\norm{f}{L_2(\Omega)}=1} \inf_{v_h\in \Ss^1_0(\Tt_h)} \norm{v_f-v_h}{H^1(\Omega)}\\
    &\leq \frac{\alpha_b}{4}\norm{u - u_h}{H^1(\Omega)}^2 +
    \frac{C^2}{\alpha_b} 
    \Bigl(
    \sup_{\norm{f}{L_2(\Omega)}=1} \inf_{v_h\in \Ss^1_0(\Tt_h)} \norm{v_f-v_h}{H^1(\Omega)}
    \Bigr)^2
  \end{align*}
  where we used Lemma~\ref{lem:an:discrete},
  property~\eqref{eq:harm:best}, and Youngs inequality in the last step.
  For the third and fourth term on the right-hand side of~\eqref{thm:est:eq1}, we
  integrate by parts to see
  \begin{align*}
    \abs{\vdual{u_d}{u-w_h}_\Omega - b(u,u-w_h)}
    &= \abs{\vdual{u_d-(\lambda+1)u}{u-w_h} + \lambda \vdual{\Delta u}{u-w_h}_\Omega
    + \lambda \dual{\partial_n u}{u-w_h}_\Gamma}\\
    &\leq \left( \norm{u_d-(\lambda+1)u}{L_2(\Omega)} + \lambda \norm{u}{H^2(\Omega)} \right)
    \norm{u-w_h}{L_2(\Omega)}\\
    &\qquad + \lambda \norm{\partial_n u}{H^{1/2-\eps}(\Gamma)}\norm{u-w_h}{H^{-1/2+\eps}(\Gamma)}.
  \end{align*}
  The triangle inequality, Lemma~\ref{lem:an:discrete}, and the estimate~\eqref{eq:harm:best} show
  \begin{align*}
    \norm{u-w_h}{L_2(\Omega)} &\lesssim
    \norm{u-\Ee w_h}{L_2(\Omega)}+
    \norm{\nabla(u-w_h)}{L_2(\Omega)} \cdot
    \sup_{\norm{f}{L_2(\Omega)}=1} \inf_{v_h\in \Ss^1_0(\Tt_h)} \norm{v_f-v_h}{H^1(\Omega)}\\
    &\lesssim
    \norm{u-w_h}{H^{-1/2}(\Gamma)} + \norm{\nabla(u-w_h)}{L_2(\Omega)}^2+
    \Bigl(
    \sup_{\norm{f}{L_2(\Omega)}=1} \inf_{v_h\in \Ss^1_0(\Tt_h)} \norm{v_f-v_h}{H^1(\Omega)}.
    \Bigr)^2,
  \end{align*}
  where we used Youngs inequality and the estimate~\eqref{eq:harm} in the last step.
  The last term on the right-hand side of~\eqref{thm:est:eq1} is bounded by
  \begin{align*}
    b(u-u_h,u-w_h)\leq \frac{\alpha_b}{4} \norm{u-u_h}{H^1(\Omega)}^2 + \frac{C_b^2}{\alpha_b}\norm{u-w_h}{H^1(\Omega)}^2
  \end{align*}
  using the boundedness of $b$, indicated by the constant $C_b$, and Youngs inequality.
  Plugging together the preceding estimates, we arrive at
  \begin{align*}
    \frac{\alpha_b}{2} \norm{u-u_h}{H^1(\Omega)}^2 \lesssim
    \norm{u-w_h}{H^1(\Omega)}^2
    + \norm{u-w_h}{H^{-1/2+\eps}(\Gamma)} +
    \Bigl(
      \sup_{\norm{f}{L_2(\Omega)}=1} \inf_{v_h\in \Ss^1_0(\Tt_h)} \norm{v_f-v_h}{H^1(\Omega)}
    \Bigr)^2
  \end{align*}
  where $w_h\in\wilde\Ss^1_{\rm ad}(\Tt_h)$ is arbitrary. This proves the statement.
\end{proof}

\section{Numerical experiments}
In this section, we will present numerical results in two space dimensions and for $p=1$
to validate our theoretical findings.
We implemented the numerical experiments in Matlab and based our code
on the package \texttt{p1afem}, cf.~\cite{FunkenWP_11}.
\subsection{Experiment 1}
We choose $\Omega = (0,1)\times(0,1)$, $f=0$ and $u_d(x,y) = x(1-y)-0.35$ and do not impose
constraints.
We use a sequence of uniformly refined triangular meshes $\Tt_j$ and denote by $N_j$ the number of triangles
of the mesh $\Tt_j$.
As $\Omega$ is a convex polygonal, we expect the convergence rates as stipulated in Corollary~\ref{cor:est},
that is
\begin{align*}
  \norm{u-u_j}{H^1(\Omega)}^2 = \OO(N_j^{-1}),
  \quad\text{ and }\quad \norm{u-u_j}{L_2(\Omega)}^2 = \OO(N_j^{-2}).
\end{align*}
As the exact solution $u$ is unknown, we use the finest approximate solution $u_J$ as
reference solution and plot the squared errors $\norm{u_J-u_j}{H^1(\Omega)}^2$ and
$\norm{u_J-u_j}{L_2(\Omega)}^2$ for $j=1,\dots, J-1$. In Figure~\ref{fig:1}, we plot these errors
and observe the theoretically predicted rates. In Figure~\ref{fig:1}, we plot the approximate solution on a
mesh with 65536 elements for comparison with the solution of Experiment 2 in Figure~\ref{fig:4}.
\begin{figure}
  \centering
  \includegraphics[width=0.8\textwidth]{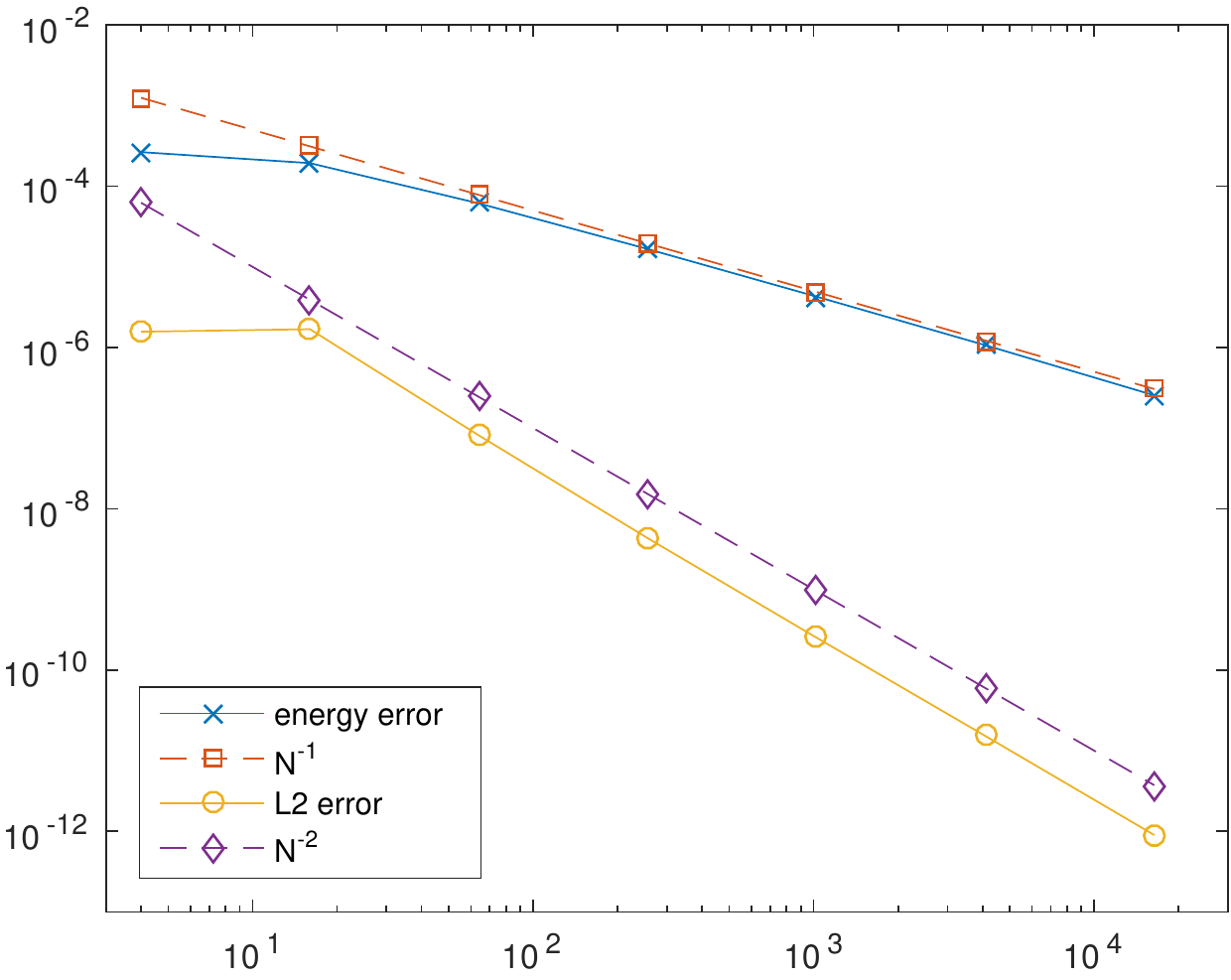}
  \caption{Squared $H^1(\Omega)$ and $L_2(\Omega)$ errors from Experiment 1.}
  \label{fig:1}
\end{figure}
\begin{figure}[htb]
  \centering
  \includegraphics[width=0.8\textwidth]{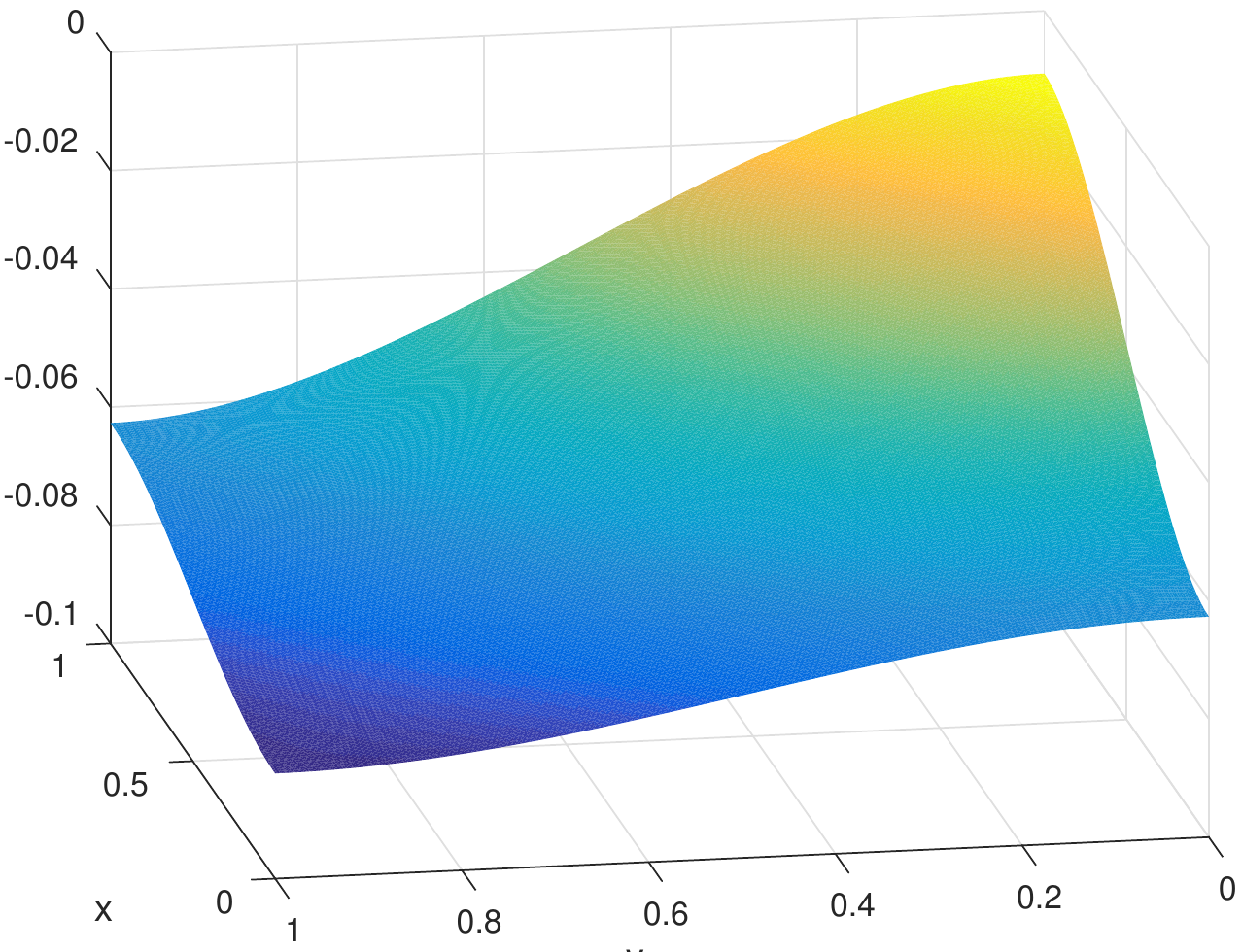}
  \caption{Approximate solution from Experiment 1.}
  \label{fig:2}
\end{figure}
\subsection{Experiment 2}
We choose the same data as in Experiment 1 and impose the restriction $0\leq g$ a.e. on $\Gamma$.
We solve the discrete system~\eqref{eq:var:constrained:disc} with the \textit{Primal-Dual Active Set Algorithm}
A.1 from~\cite{KarkkainenKT_Obstacle_03}, cf. also~\cite{HoppeK_Obstacle_94}.
In Figure~\ref{fig:3} we plot the convergence rates in $H^1(\Omega)$ and $L_2(\Omega)$, using
the same method as in Experiment 1. Again we observe the optimal rates.
In Figure~\ref{fig:4} we plot the approximate solution. Note that due to the restriction $0\leq g$
and the fact that $u$ is harmonic, $u$ has to be positive due to the maximum principle.
\begin{figure}[htb]
  \centering
  \includegraphics[width=0.8\textwidth]{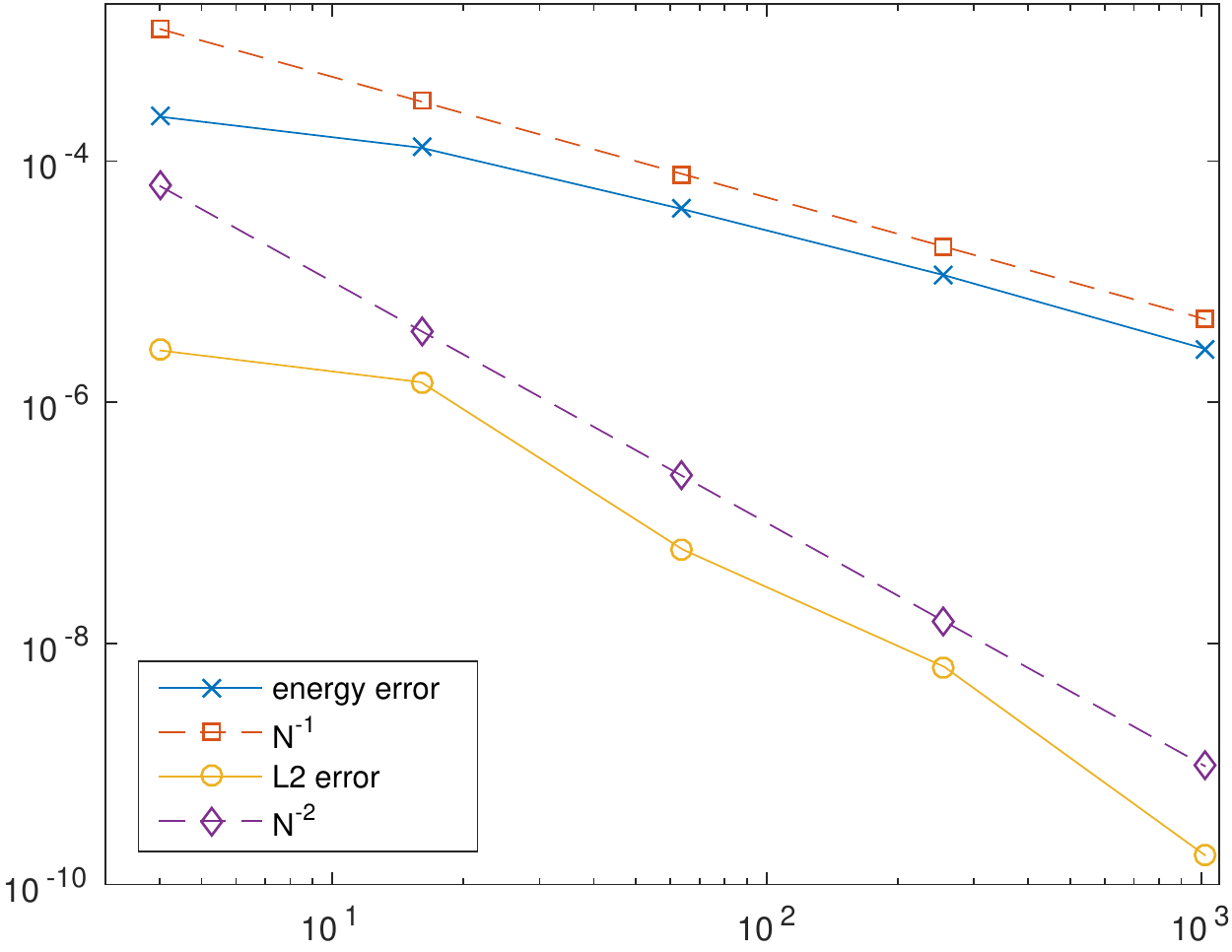}
  \caption{Squared $H^1(\Omega)$ and $L_2(\Omega)$ errors from Experiment 2.}
  \label{fig:3}
\end{figure}
\begin{figure}[htb]
  \centering
  \includegraphics[width=0.8\textwidth]{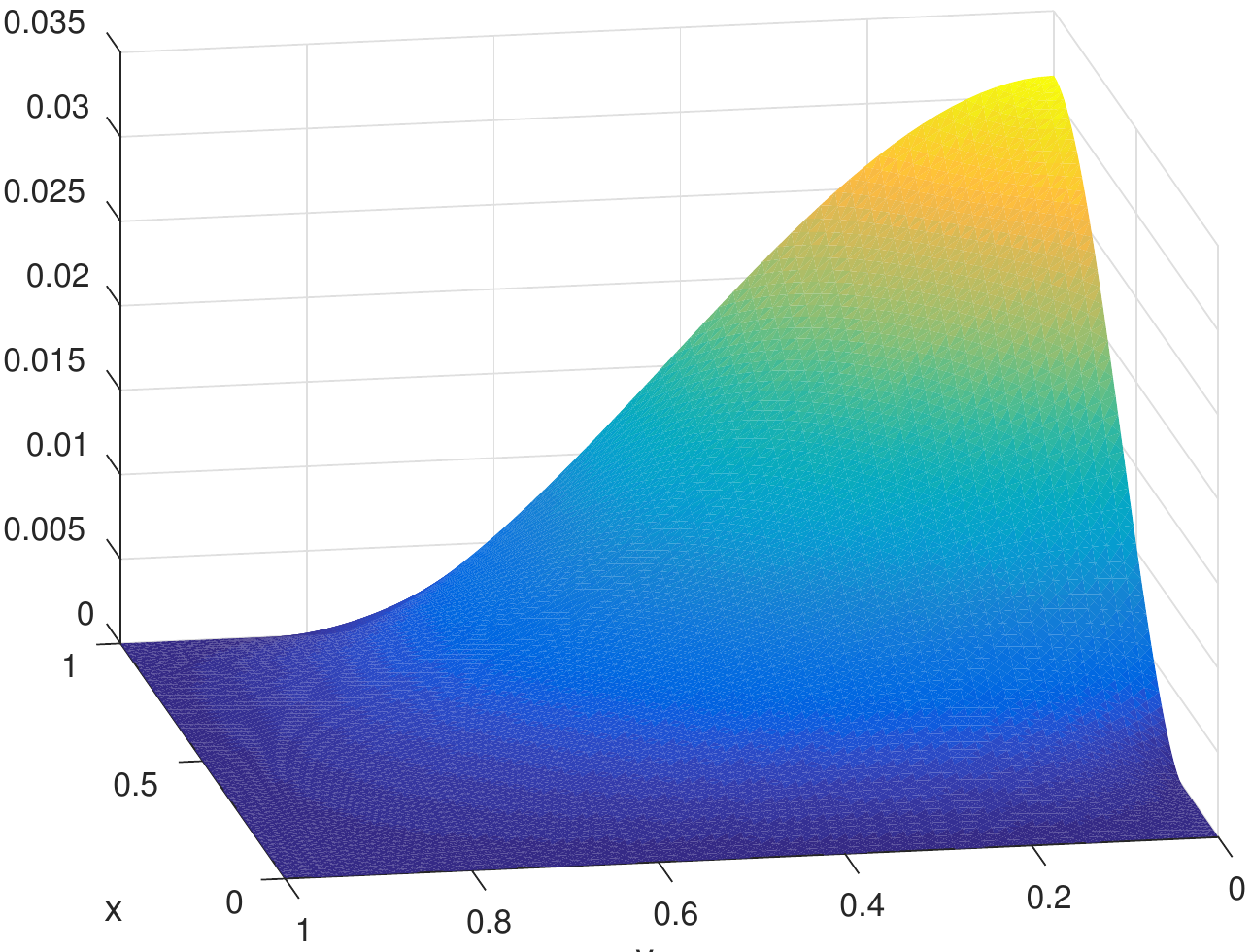}
  \caption{Approximate solution from Experiment 2.}
  \label{fig:4}
\end{figure}
\subsection{Experiment 3}
In this experiment, we use a problem with known exact solution, taken from~\cite{GongLTY_18}.
If we set
\begin{align*}
  u_d(x,y) &= 2\pi^2 \left( \cos(2\pi x)\sin^2(\pi y) + \sin^2(\pi x)\cos(2\pi y) \right),
\end{align*}
then the function $u(x,y)=0$ is the exact solution of the problem~\eqref{eq:problem:new:min}-\eqref{eq:problem:new:pde}.
We plot the squared $L_2(\Omega)$ and $H^1(\Omega)$ errors in Figure~\ref{fig:5}, and for both we
observe an error of $\OO(N^{-2})$. To explain this behaviour, note that as $u$ is part of our approximation space,
our method theoretically computes the exact solution. However,
the right-hand side in~\eqref{eq:var:disc} involves numerical quadrature of $u_d$, and so the discretization error
is bounded by the quadrature error.
\begin{figure}[htb]
  \centering
  \includegraphics[width=0.8\textwidth]{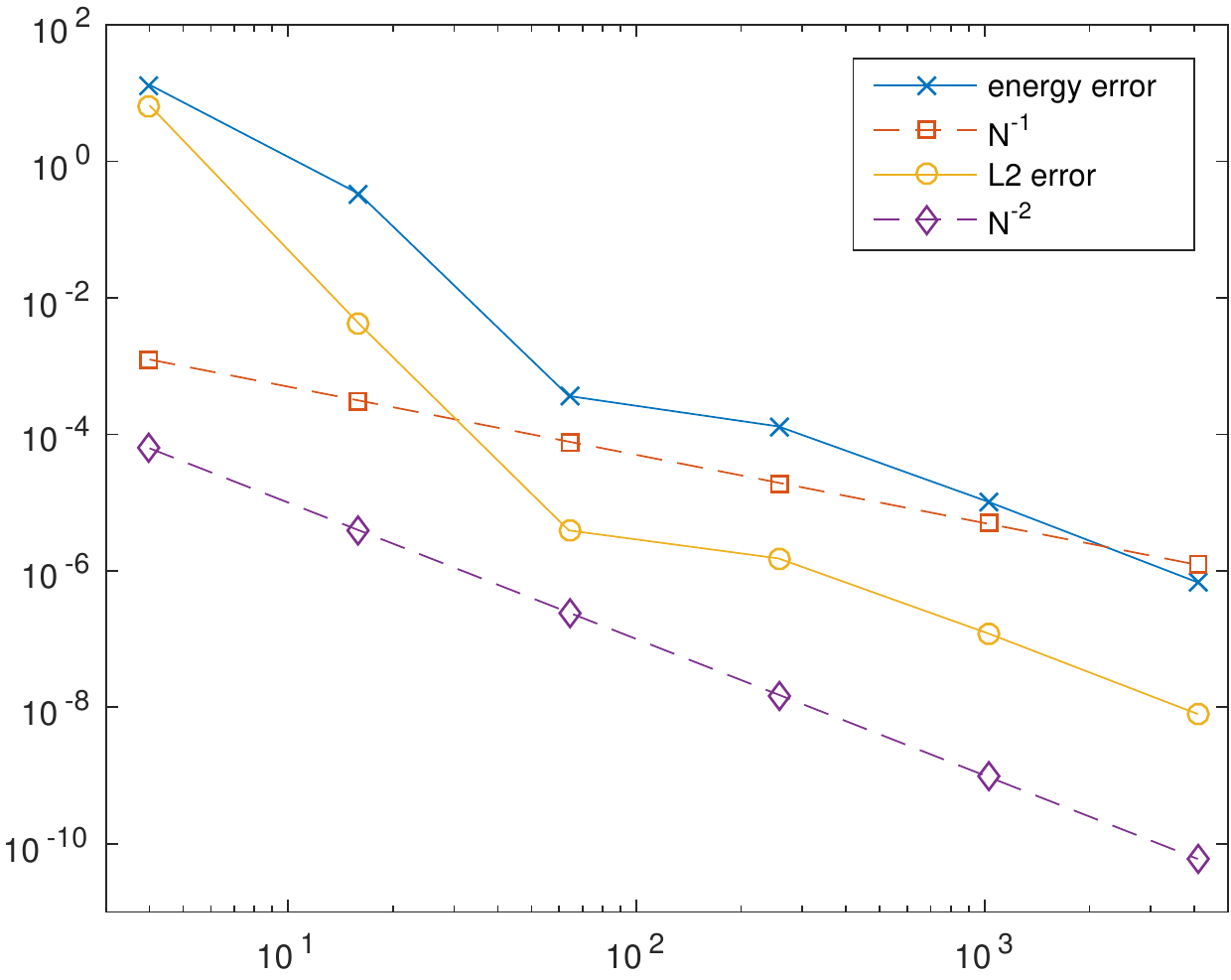}
  \caption{Squared $H^1(\Omega)$ and $L_2(\Omega)$ errors from Experiment 3.}
  \label{fig:5}
\end{figure}
\bigskip

\textbf{Acknowledgments:} The author would like to thank his colleagues Alejandro Allendes and Enrique Ot\'arola for fruitful discussions.
\bibliographystyle{abbrv}
\bibliography{literature}

\begin{thebibliography}{10}

\bibitem{ApelMP_15}
T.~Apel, M.~Mateos, J.~Pfefferer, and A.~R\"osch.
\newblock On the regularity of the solutions of {D}irichlet optimal control
  problems in polygonal domains.
\newblock {\em SIAM J. Control Optim.}, 53(6):3620--3641, 2015.

\bibitem{ApelNP_16}
T.~Apel, S.~Nicaise, and J.~Pfefferer.
\newblock Discretization of the {P}oisson equation with non-smooth data and
  emphasis on non-convex domains.
\newblock {\em Numer. Methods Partial Differential Equations},
  32(5):1433--1454, 2016.

\bibitem{BBFM_03}
F.~Ben~Belgacem, H.~El~Fekih, and H.~Metoui.
\newblock Singular perturbation for the {D}irichlet boundary control of
  elliptic problems.
\newblock {\em M2AN Math. Model. Numer. Anal.}, 37(5):883--850, 2003.

\bibitem{BennerY_17}
P.~Benner and H.~Y\"ucel.
\newblock Adaptive {S}ymmetric {I}nterior {P}enalty {G}alerkin {M}ethod for
  {B}oundary {C}ontrol {P}roblems.
\newblock {\em SIAM J. Numer. Anal.}, 55(2):1101--1133, 2017.

\bibitem{braess}
D.~Braess.
\newblock {\em Finite elements}.
\newblock Cambridge University Press, Cambridge, third edition, 2007.
\newblock Theory, fast solvers, and applications in elasticity theory,
  Translated from the German by Larry L. Schumaker.

\bibitem{BrennerS_08}
S.~C. Brenner and L.~R. Scott.
\newblock {\em The mathematical theory of finite element methods}, volume~15 of
  {\em Texts in Applied Mathematics}.
\newblock Springer, New York, third edition, 2008.

\bibitem{Carstensen_99}
C.~Carstensen.
\newblock Quasi-interpolation and a posteriori error analysis in finite element
  methods.
\newblock {\em M2AN Math. Model. Numer. Anal.}, 33(6):1187--1202, 1999.

\bibitem{CasasMR_09}
E.~Casas, M.~Mateos, and J.-P. Raymond.
\newblock Penalization of {D}irichlet optimal control problems.
\newblock {\em ESAIM Control Optim. Calc. Var.}, 15(4):782--809, 2009.

\bibitem{CasasR_06}
E.~Casas and J.-P. Raymond.
\newblock Error estimates for the numerical approximation of {D}irichlet
  boundary control for semilinear elliptic equations.
\newblock {\em SIAM J. Control Optim.}, 45(5):1586--1611, 2006.

\bibitem{ChangGY_17}
L.~Chang, W.~Gong, and N.~Yan.
\newblock Weak boundary penalization for {D}irichlet boundary control problems
  governed by elliptic equations.
\newblock {\em J. Math. Anal. Appl.}, 453(1):529--557, 2017.

\bibitem{ChowdhuryGN_17}
S.~Chowdhury, T.~Gudi, and A.~K. Nandakumaran.
\newblock Error bounds for a {D}irichlet boundary control problem based on
  energy spaces.
\newblock {\em Math. Comp.}, 86(305):1103--1126, 2017.

\bibitem{delosReyes_15}
J.~C. De~los Reyes.
\newblock {\em Numerical {PDE}-constrained optimization}.
\newblock SpringerBriefs in Optimization. Springer, Cham, 2015.

\bibitem{DeckelnickGH_09}
K.~Deckelnick, A.~G\"unther, and M.~Hinze.
\newblock Finite element approximation of {D}irichlet boundary control for
  elliptic {PDE}s on two- and three-dimensional curved domains.
\newblock {\em SIAM J. Control Optim.}, 48(4):2798--2819, 2009.

\bibitem{Falk_74}
R.~S. Falk.
\newblock Error estimates for the approximation of a class of variational
  inequalities.
\newblock {\em Math. Comput.}, 28:963--971, 1974.

\bibitem{FunkenWP_11}
S.~Funken, D.~Praetorius, and P.~Wissgott.
\newblock Efficient implementation of adaptive {P}1-{FEM} in {M}atlab.
\newblock {\em Comput. Methods Appl. Math.}, 11(4):460--490, 2011.

\bibitem{FursikovGH_98}
A.~V. Fursikov, M.~D. Gunzburger, and L.~S. Hou.
\newblock Boundary value problems and optimal boundary control for the
  {N}avier-{S}tokes system: the two-dimensional case.
\newblock {\em SIAM J. Control Optim.}, 36(3):852--894, 1998.

\bibitem{Geveci_Neumann_79}
T.~Geveci.
\newblock On the approximation of the solution of an optimal control problem
  governed by an elliptic equation.
\newblock {\em RAIRO Anal. Num\'er.}, 13(4):313--328, 1979.

\bibitem{GiraultR_86}
V.~Girault and P.-A. Raviart.
\newblock {\em Finite element methods for {N}avier-{S}tokes equations},
  volume~5 of {\em Springer Series in Computational Mathematics}.
\newblock Springer-Verlag, Berlin, 1986.
\newblock Theory and algorithms.

\bibitem{GlowinskiLT_81}
R.~Glowinski, J.-L. Lions, and R.~Tr\'emoli\`eres.
\newblock {\em Numerical analysis of variational inequalities}, volume~8 of
  {\em Studies in Mathematics and its Applications}.
\newblock North-Holland Publishing Co., Amsterdam-New York, 1981.
\newblock Translated from the French.

\bibitem{GongLTY_18}
W.~Gong, W.~Liu, Z.~Tan, and N.~Yan.
\newblock A convergent adaptive finite element method for elliptic dirichlet
  boundary control problems.
\newblock {\em IMA Journal of Numerical Analysis}, page dry051, 2018.

\bibitem{GongY_Mixed_11}
W.~Gong and N.~Yan.
\newblock Mixed finite element method for {D}irichlet boundary control problem
  governed by elliptic {PDE}s.
\newblock {\em SIAM J. Control Optim.}, 49(3):984--1014, 2011.

\bibitem{GunzburgerHS_92}
M.~D. Gunzburger, L.~Hou, and T.~P. Svobodny.
\newblock Boundary velocity control of incompressible flow with an application
  to viscous drag reduction.
\newblock {\em SIAM J. Control Optim.}, 30(1):167--181, 1992.

\bibitem{HoppeK_Obstacle_94}
R.~H.~W. Hoppe and R.~Kornhuber.
\newblock Adaptive multilevel methods for obstacle problems.
\newblock {\em SIAM J. Numer. Anal.}, 31(2):301--323, 1994.

\bibitem{JohnW_DirichletControl_09}
C.~John and D.~Wachsmuth.
\newblock Optimal {D}irichlet boundary control of stationary {N}avier-{S}tokes
  equations with state constraint.
\newblock {\em Numer. Funct. Anal. Optim.}, 30(11-12):1309--1338, 2009.

\bibitem{KarkkainenKT_Obstacle_03}
T.~K\"arkk\"ainen, K.~Kunisch, and P.~Tarvainen.
\newblock Augmented {L}agrangian active set methods for obstacle problems.
\newblock {\em J. Optim. Theory Appl.}, 119(3):499--533, 2003.

\bibitem{KunischV_07}
K.~Kunisch and B.~Vexler.
\newblock Constrained {D}irichlet boundary control in {$L^2$} for a class of
  evolution equations.
\newblock {\em SIAM J. Control Optim.}, 46(5):1726--1753, 2007.

\bibitem{Lions_71}
J.-L. Lions.
\newblock {\em Optimal control of systems governed by partial differential
  equations.}
\newblock Translated from the French by S. K. Mitter. Die Grundlehren der
  mathematischen Wissenschaften, Band 170. Springer-Verlag, New York-Berlin,
  1971.

\bibitem{MayRV_13}
S.~May, R.~Rannacher, and B.~Vexler.
\newblock Error analysis for a finite element approximation of elliptic
  {D}irichlet boundary control problems.
\newblock {\em SIAM J. Control Optim.}, 51(3):2585--2611, 2013.

\bibitem{NochettoW_02}
R.~H. Nochetto and L.~B. Wahlbin.
\newblock Positivity preserving finite element approximation.
\newblock {\em Math. Comp.}, 71(240):1405--1419, 2002.

\bibitem{OfPS_15}
G.~Of, T.~X. Phan, and O.~Steinbach.
\newblock An energy space finite element approach for elliptic {D}irichlet
  boundary control problems.
\newblock {\em Numer. Math.}, 129(4):723--748, 2015.

\bibitem{ScottZ_90}
L.~R. Scott and S.~Zhang.
\newblock Finite element interpolation of nonsmooth functions satisfying
  boundary conditions.
\newblock {\em Math. Comp.}, 54(190):483--493, 1990.

\bibitem{troltzsch_10}
F.~Tr{\"o}ltzsch.
\newblock {\em Optimal control of partial differential equations}, volume 112
  of {\em Graduate Studies in Mathematics}.
\newblock American Mathematical Society, Providence, RI, 2010.
\newblock Theory, methods and applications, Translated from the 2005 German
  original by J{\"u}rgen Sprekels.

\end{thebibliography}
\end{document}